\newcommand*{\R}{{\mathbb{R}}}
\newtheorem{lemma}{Lemma}
\newtheorem{definition}{Definition}
\newtheorem{remark}{Remark}
\newtheorem{example}{Example}
\title{Affine-invariant ensemble transform methods for logistic regression}
\author{Jakiw Pidstrigach and Sebastian Reich}
\address{Institut f\"ur Mathematik, Universit\"at Potsdam, Karl-Liebknecht-Str. 24/25, 14476 Potsdam}
\begin{document}

\begin{abstract} We investigate the application of ensemble transform approaches to Bayesian inference of logistic regression problems. Our approach relies on appropriate extensions of the popular ensemble Kalman filter and the feedback particle filter to the cross entropy loss function and is based on a well-established homotopy approach to Bayesian inference. The arising finite particle evolution equations as well as their mean-field limits are affine-invariant. Furthermore, the proposed methods can be implemented in a gradient-free manner in case of nonlinear logistic regression and the data can be randomly subsampled similar to mini-batching of stochastic gradient descent. We also propose a closely related SDE-based sampling method which again is affine-invariant and can easily be made gradient-free. Numerical examples demonstrate the appropriateness of the proposed methodologies.
\end{abstract}

\maketitle


%
\section{Introduction}
%

Statistical inference for logistic regression and classification problems is a well studied problem from both parameter optimisation and Bayesian inference perspectives \cite{Bishop06}. While many classical optimisation and Markov chain Monte Carlo (MCMC) methods are applicable, an efficient inference in the context of semi-parametric models still poses computational challenges. With this paper, we address this challenge by investigating the extension of 
coupling-of-measures and ensemble transform methodologies \cite{reichcotter15} from the squared error loss function to the cross entropy loss function typically used for logistic regression and classification.

More specifically, previous work on ensemble methods for Bayesian inference, such as the ensemble Kalman filter (EnKF) \cite{evensen} and the feedback particle filter (FPF) \cite{meyn13,Yang2014,TdWMR17}, have almost exclusively focused on the squared error loss function of the form
\begin{equation}\label{eq:QL}
\Psi_{\rm data}(\theta) = \frac{1}{2} (g(\theta) - t)^{\rm T} \Gamma^{-1} (g(\theta) -t)\,,
\end{equation}
with $g:\R^D \to \R^N$ a forward map, $t\in \R^N$ the data, $\Gamma \in \R^{N\times N}$ the measurement error covariance matrix, 
and $\theta \in \R^D$ the parameters to be estimated. 
Notable exceptions include the application of ensemble Kalman inversion (EKI) \cite{KS19} and the modified EnKF formulation of 
\cite{HLR18} to the training of neural networks with a cross entropy loss function. While these methods seek to minimise a regularised cross entropy loss function, the more recent work \cite{HTL20} has also investigated ensemble-based sampling methods for Bayesian inference in the context of logistic regression and classification. This work relies on the time-stepping of appropriate stochastic differential 
equations (SDEs) and is in line with several other ensemble-based sampling methods such as the ensemble preconditioned MCMC methods of \cite{LMW18}, the ensemble Kalman sampler (EKS) \cite{GIHWS20}, and affine invariant Langevin dynamics (ALDI) \cite{GINR19}. Contrary to such invariance-of-measures based SDE methodologies, the approach taken in this paper is instead 
founded on the homotopy approach to Bayesian inference, as first formulated in a time-continuous framework in 
\cite{daum10,reich10}, and which is close in spirit to the iterative application of the EnKF \cite{SOB12} 
and parameter estimation methods based on sequential Monte Carlo (SMC) methods \cite{BCJ14}.

In addition to expanding the homotopy-based approach to logistic regression, we address two further important concepts, namely affine-invariance of the proposed methods \cite{GW10,LMW18,GINR19} and sub-sampling of data points, as widely used in stochastic gradient descent \cite{LTE19}. Both concepts are used to improve the computational efficiency of optimisation and sampling methods. Take, for example, a  two dimensional Gaussian random variable with mean zero and covariance matrix  
\begin{equation*}
    \Sigma = \left( \begin{array}{cc} \epsilon^2 & 0 \\ 0 & 1 \end{array} \right).
\end{equation*}
A random walk Metropolis--Hastings algorithm will sample inefficiently whenever $\epsilon^2$ is vastly different from one. An affine-invariant modification, on the other hand, will sample this problem as efficiently as if $\epsilon$ were set to $\epsilon = 1$ \cite{GW10}. Sub-sampling replaces the exact gradient of a cost functional by a cheaper to evaluate stochastic approximation, which agrees with the exact gradient in expectation.

We also discuss the possibility for derivative-free implementations \cite{evensen,KS19,GIHWS20,GINR19}, localisation \cite{evensen,reichcotter15} via dropouts \cite{JMLR:v15:srivastava14a}, and efficient linearly implicit time-stepping methods \cite{akir11}.

Our main contributions with regard to Bayesian homotopy methods for logistic regression are:
\begin{itemize}
\item an extension of affine-invariant ensemble transform approaches, such as the EnKF, to logistic regression,
\item an affine-invariant generalisation of the FPF and its application to logistic regression,
\item an extension of data sub-sampling (mini-batches) to Bayesian homotopy methods.
\end{itemize}
In a further step, we combine these homotopy approaches with SDE-based sampling methods in order to derive
\begin{itemize}
\item a derivative-free and affine-invariant SDE-based sampling methods for logistic regression.
\end{itemize}
We demonstrate the appropriateness of the proposed methods by means of a set of numerical experiments. Extensions to nonlinear
and multi-class logistic regression \cite{Bishop06} are also discussed. We also briefly discuss the application to sigmoidal Cox processes \cite{AMMK09}.

The layout of this paper is as follows. The required mathematical background material on both logistic regression is collected in Section \ref{sec:background}. The homotopy approach to Bayesian inference is summarised in Section \ref{sec:homotopy}. There we also present an affine-invariant formulation of the homotopy approach  and discuss  and analyse data sub-sampling in the spirit of \cite{LTE19}. Section \ref{sec:transform} develops three different algorithmic approaches for the implementation of homotopy-based 
Bayesian inference for logistic regression. More specifically, we propose an affine-invariant modification of the FPF and two extensions of the EnKF to logistic regression. We also discuss robust numerical implementations combining dropouts \cite{JMLR:v15:srivastava14a} with localisation \cite{evensen,reichcotter15} and linearly implicit time-stepping methods \cite{akir11}. Section \ref{sec:LD} combines SDE-based sampling methods with a homotopy-based drift term in order to derive a gradient-free and affine-invariant algorithm for Bayesian inference. While the proposed method is only exact for Gaussian measures, it can be used for faster equilibration and approximate inference provided the posterior distribution is close to Gaussian.  Numerical results are presented and evaluated in Section \ref{sec:numerics} and provide a proof-of-concept while a more detailed exploration is left to follow-up work. Several possible extensions of the proposed methods, namely multi-class classification, nonlinear  logistic regression, and sigmoidal Cox processes, are discussed in Section \ref{sec:extensions} followed by a summary statement in Section \ref{sec:conclusions}.

%
\section{Mathematical problem formulation} \label{sec:background}
%

This paper is motivated by the classical logistic regression problem arising from classification into $L>1$ classes, denoted by $C_l$, over an input space $x \in \R^{J}$ \cite{Bishop06}. In order to simplify the exposition, we start with the binary classification problem, that is, $L=2$ and focus on the linear case. The extension to nonlinear and multi-class logistic regression is discussed in Section \ref{sec:extensions}.

The posterior probability for class $C_1$ is defined as a logistic sigmoid 
$$
\sigma (a) = \frac{1}{1+\exp(-a)}
$$
acting on a linear combination of $x$-dependent and vector-valued features  $\phi_x \in \R^D$ 
so that
\begin{equation} \label{eq:conditional}
\pi (C_1|\phi_x) = \sigma \left( \theta^{\rm T} \phi_x \right).
\end{equation}
The model has $D$ adjustable parameters $\theta \in \R^{D}$. The probability of the complementary class $C_2$ is given by $\pi (C_2|\phi_x) = 1-\pi (C_1|\phi_x)$.  Given a data set $\{(x_n,t_n)\}_{n=1}^N$ with labels $t_n \in \{0,1\}$, the negative log-likelihood function is given by the cross-entropy function
\begin{equation} \label{eq:nlf}
\Psi_{\rm data} (\theta) = - \sum_{n=1}^N \left\{ t_n \ln y_n + (1-t_n)\ln (1-y_n)\right\},
\end{equation}
where $y_n := y_{x_n}(\theta)$ with
\begin{equation} \label{eq:y_x}
y_x(\theta) := \sigma \left( \theta^{\rm T} \phi_x \right).
\end{equation}
Here $t_n = 1$ for samples $x_n$ which are assigned to class $C_1$ and $t_n = 0$ for samples from the complementary class $C_2$.

We introduce the further shorthand
$$
\phi_n = \phi_{x_n}
$$
and, using
$$
\frac{\rm d}{{\rm d}a} \sigma(a) = \sigma(a) (1-\sigma(a)),
$$
obtain the gradient of the cross-entropy function
\begin{equation}\label{eq:gradient}
\nabla_\theta \Psi_{\rm data} (\theta) = \sum_{n=1}^N (y_n-t_n) \phi_n \,.
\end{equation}
We rewrite this gradient more compactly as
as
$$
\nabla_\theta \Psi_{\rm data}(\theta) = \Phi \,(y(\theta)-t)
$$
using
$$
y(\theta) = (y_1,\ldots,y_n)^{\rm T} \in \R^N, \quad t = (t_1,\ldots,t_N)^{\rm T} \in \R^N,
\quad \Phi = \left( \phi_1,\ldots,\phi_N \right) \in \R^{D\times N}.
$$ 
The term $y(\theta)-t$ is often called the innovation in the Kalman filter literature. The Hessian matrix of second-order derivative is given by
$$
D^2_\theta \Psi_{\rm data}(\theta) = \sum_{n=1}^N \phi_n y_n (1-y_n) \phi_n^{\rm T} = \Phi R(\theta) \Phi^{\rm T}
$$
where $R(\theta) \in \R^{N\times N}$ denotes the diagonal matrix with entries $r_{nn} = y_n (1-y_n)$. 

Let us denote a minimiser of $\Psi_{\rm data}(\theta)$ by $\theta_{\rm MLE}$. We note that the maximum likelihood estimators (MLEs) are
not uniquely determined in the over-parametrised setting, that is, when $D \gg N$. For that reason and in order to avoid overfitting 
when $D \approx N$, we introduce a Gaussian prior probability density function (PDF) over the parameters via
$$
\pi_{\rm prior}(\theta) \propto \exp(-\Psi_{\rm prior}(\theta)), \qquad \Psi_{\rm prior}(\theta) := \frac{1}{2} (\theta-m_{\rm prior})^{\rm T} 
\Sigma_{\rm prior}^{-1}(\theta-m_{\rm prior}),
$$
that is, the posterior parameter distribution $\pi_{\rm post}$ is given by
\begin{equation} \label{eq:posterior}
\pi_{\rm post}(\theta) \propto \exp(-\Psi_{\rm post}(\theta)), \qquad \Psi_{\rm post}(\theta):= \Psi_{\rm data}(\theta) 
+ \Psi_{\rm prior}(\theta)\,.
\end{equation}
Here the prior mean $m_{\rm prior} \in \R^D$ and covariance matrix $\Sigma_{\rm prior} \in \R^{D\times D}$ need to be 
chosen appropriately. 

It is known that $\Psi_{\rm post}$ is a strongly convex function since $D^2_\theta \Psi_{\rm data}(\theta) \ge 0$ 
and, hence, the MAP estimator, that is,
$$
\theta_{\rm MAP} = \arg \min_{\rm \theta \in \R^D} \Psi_{\rm post}(\theta),
$$
is uniquely defined. Furthermore, assuming the availability of sufficiently many data points, that is $N\gg 1$, the posterior is well approximated by a Gaussian with mean $\theta_{\rm MAP}$ and covariance matrix
$$
\Sigma_{\rm MAP} = \left(D^2_\theta \Psi_{\rm post}(\theta_{\rm MAP})\right)^{-1},
$$
where 
$$
D^2_\theta \Psi_{\rm post}(\theta)  = \Phi R(\theta) \Phi^{\rm T} + \Sigma_{\rm prior}^{-1} 
\in \R^{D \times D}
$$
denotes the Hessian matrix of second-order derivatives. This is an implication of the Bernstein--von Mises theorem \cite{TS15}. 

%
\section{The homotopy approach to Bayesian inference} \label{sec:homotopy}
%

While there are many Monte Carlo methods available for sampling from the posterior $\pi_{\rm post}$, we focus in this paper on
ensemble transform methods based on the homotopy approach \cite{daum10,reich10}
\begin{equation} \label{eq:homotopy_def}
\pi_\tau (\theta) \propto \exp(-\tau \Psi_{\rm data}(\theta)) \,\pi_{\rm prior}(\theta)
\end{equation}
with $\tau \in [0,1]$, that is, $\pi_0 = \pi_{\rm prior}$ and $\pi_{\rm post} = \pi_1$. The PDFs $\pi_\tau$ satisfy the evolution
equation
\begin{equation} \label{eq:BIT}
\partial_\tau \pi_\tau = - (\Psi_{\rm data} - \pi_\tau \left[\Psi_{\rm data}\right])\,\pi_\tau ,
\end{equation}
where $\pi_\tau [f]$ denotes the expectation value of a function $f:\R^D \to \R$ under the PDF $\pi_\tau$.

In order to derive the associated evolution equations for random variables $\theta_\tau \sim \pi_\tau$, we introduce
potentials $V_\tau:\R^D \to \R$, which are defined by the elliptic partial differential equation (PDE)
\begin{equation}\label{eq:PDE}
\nabla_\theta \cdot (\pi_\tau \Sigma_\tau \nabla_\theta V_\tau) = - (\Psi_{\rm data} - \pi_\tau \left[\Psi_{\rm data}\right])\,\pi_\tau,
\end{equation}
where $\Sigma_\tau$ denotes the covariance matrix of $\pi_\tau$, that is,
$$
\Sigma_\tau = \pi_\tau \left[ (\theta-\pi_\tau [\theta])(\theta-\pi_\tau [\theta])^{\rm T} \right].
$$
Hence the evolution equations are given by
\begin{equation} \label{eq:TODE}
\frac{\rm d}{{\rm d}\tau} \theta_\tau = -\Sigma_\tau \nabla_\tau V_\tau (\theta_\tau), \qquad \theta_0 \sim \pi_{\rm prior} ,
\end{equation}
such that
\begin{equation}\label{eq:post}
\theta_1 \sim \pi_{\rm post}.
\end{equation}
See \cite{reich10} for further details.

\begin{remark} If different models $\mathcal{M}_i$, $i=1,\ldots,I_M$, need to be compared via their Bayes factors $\mathbb{P}[\mathcal{M}_i]$ \cite{KR95}, then the homotopy approach (\ref{eq:BIT}) can also be used and gives rise to the evolution equation
$$
\frac{\rm d}{{\rm d}\tau} \mathbb{P}_\tau =
- \pi_\tau\left[\Psi_{\rm data}\right],
$$
$\tau \in [0,1]$, which is to be solved for each model $\mathcal{M}_i$ with $\mathbb{P}_0$ its prior probability, $\pi_\tau$ its transformed parameter PDFs, as defined by (\ref{eq:BIT}), and $\Psi_{\rm data}$ the model's negative log-likelihood function. It then holds that $\mathbb{P}[\mathcal{M}_i] = \mathbb{P}_1$.
\end{remark}

\begin{remark}
It is, of course, feasible to use a stopping time different from $\tau = 1$ in the homotopy approach (\ref{eq:TODE}). This has already been hinted at in in the context of optimisation and Tikhonov regularisations  in \cite{reich10}; while subsequently being fully explored by the EKI methodology. See \cite{KS19,HSS21} for a detailed description in the context of machine learning as well as for an extensive literature survey.
\end{remark}

\subsection{Affine-invariance} \label{sec:affine_invariance}

Note that any vector field $F_\tau(\theta)$, which satisfies $$
\nabla_\theta \cdot (\pi_\tau F_\tau) = - (\Psi_{\rm data} - \pi_\tau \left[\Psi_{\rm data}\right])\,\pi_\tau ,
$$
will lead to an associated evolution equation 
\begin{equation}\label{eq:TODEG}
\frac{\rm d}{{\rm d}\tau} \theta_\tau = -F_\tau (\theta_\tau), \qquad \theta_0 \sim \pi_{\rm prior}
\end{equation}
such that (\ref{eq:post}) holds. The specific choice $F_\tau = \Sigma_\tau \nabla V_\tau$ in (\ref{eq:PDE}) is motivated by the concept of affine-invariance \cite{GW10,GINR19}, which we explain next.

Let
\begin{equation} \label{eq:affine}
\theta= A \overline{\theta} + b
\end{equation}
denote an invertible affine transformation between two variables $\theta\in \R^D$ and $\overline{\theta}\in \R^D$. 
Furthermore, the
Bayesian inference problem for the variable $\theta$ is defined by $\Psi_{\rm data}$ and $\Psi_{\rm prior}$ and the corresponding problem for the transformed variable $\overline{\theta}$ by
$$
\overline{\Psi}_{\rm data}(\overline{\theta}) := \Psi_{\rm data}(A\overline{\theta} + b), 
\qquad \overline{\Psi}_{\rm prior}(\overline{\theta}) := \Psi_{\rm prior}(A\overline{\theta} + b).
$$
Hence the PDFs $\overline{\pi}_\tau$ for $\overline{\theta}$ satisfy
$$
\overline{\pi}_\tau (\overline{\theta}) = \pi_\tau(A\overline{\theta} + b) \,|A|,\qquad \tau \in [0,1]\,.
$$

\begin{definition} \label{def:AI}
A homotopy formulation (\ref{eq:TODEG}) is called affine-invariant if the associated vector fields $F_\tau$ and $\overline{F}_\tau$
satisfy
\begin{equation} \label{eq:affineC}
F_\tau(A\overline{\theta} + b) = A \overline{F}_\tau(\overline{\theta})
\end{equation}
for any suitable choice of $A \in \R^{D\times D}$ and $b \in \R^D$, which implies $\theta_\tau = A \overline{\theta}_\tau 
+ b$  for all $\tau >0$ if it holds at $\tau =0$.
\end{definition}

\begin{lemma}
The choice
\begin{equation} \label{eq:affine_drift}
F_\tau(\theta) = \Sigma_\tau \nabla_\theta V_\tau (\theta)
\end{equation}
leads to an affine-invariant homotopy formulation and the transformed vector field is given by
$$
\overline{F}_\tau(\overline{\theta}) = \overline{\Sigma}_\tau \nabla_{\overline{\theta}} \overline{V}_\tau (\overline{\theta})
$$
with the potential $\overline{V}_\tau$ satisfying the elliptic PDE
\begin{equation} \label{eq:PDE2}
\nabla_{\overline{\theta}} \cdot (\overline{\pi}_\tau \overline{\Sigma}_\tau \nabla_{\overline{\theta}} \overline{V}_\tau) = - (
\overline{\Psi}_{\rm data} - \overline{\pi}_\tau \left[\overline{\Psi}_{\rm data}\right])\,\overline{\pi}_\tau.
\end{equation}
\end{lemma}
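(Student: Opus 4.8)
The plan is to write down the transformed potential explicitly and then verify the intertwining relation (\ref{eq:affineC}) by a change of variables in the elliptic PDE. Take $\overline{V}_\tau(\overline{\theta}) := V_\tau(A\overline{\theta}+b)$ as the natural candidate for the solution of (\ref{eq:PDE2}). Three elementary transformation rules are used throughout. First, $\overline{\pi}_\tau(\overline{\theta}) = \pi_\tau(A\overline{\theta}+b)\,|A|$ means that if $\overline{\theta}\sim\overline{\pi}_\tau$ then $A\overline{\theta}+b\sim\pi_\tau$, so the means satisfy $\pi_\tau[\theta] = A\,\overline{\pi}_\tau[\overline{\theta}]+b$ and hence the covariances satisfy $\Sigma_\tau = A\,\overline{\Sigma}_\tau A^{\rm T}$. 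Second, since $\overline{\Psi}_{\rm data} = \Psi_{\rm data}\circ(A\,\cdot+b)$ and $\overline{\pi}_\tau$ is the corresponding pullback density, the normalising expectations agree: $\overline{\pi}_\tau[\overline{\Psi}_{\rm data}] = \pi_\tau[\Psi_{\rm data}]$. Third, the chain rule gives $\nabla_{\overline{\theta}}\overline{V}_\tau(\overline{\theta}) = A^{\rm T}(\nabla_\theta V_\tau)(A\overline{\theta}+b)$.

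Next I would substitute this candidate into the left-hand side of (\ref{eq:PDE2}). Combining $\overline{\pi}_\tau(\overline{\theta}) = |A|\,\pi_\tau(A\overline{\theta}+b)$, the covariance rule $\overline{\Sigma}_\tau = A^{-1}\Sigma_\tau A^{-{\rm T}}$, and the chain rule, the flux field collapses to
$$
\overline{\pi}_\tau\,\overline{\Sigma}_\tau\,\nabla_{\overline{\theta}}\overline{V}_\tau
= |A|\,A^{-1}\bigl[\pi_\tau\,\Sigma_\tau\,\nabla_\theta V_\tau\bigr](A\overline{\theta}+b).
$$
The key computational step is then the divergence identity
$\nabla_{\overline{\theta}}\cdot\bigl(A^{-1}G(A\,\cdot+b)\bigr) = (\nabla_\theta\cdot G)(A\,\cdot+b)$,
valid for any $C^1$ vector field $G$ and following from $\sum_i (A^{-1})_{ij}A_{ki}=\delta_{kj}$. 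Applying it with $G=\pi_\tau\Sigma_\tau\nabla_\theta V_\tau$ and invoking the defining PDE (\ref{eq:PDE}) yields
$$
\nabla_{\overline{\theta}}\cdot\bigl(\overline{\pi}_\tau\,\overline{\Sigma}_\tau\,\nabla_{\overline{\theta}}\overline{V}_\tau\bigr)
= -|A|\,\bigl(\Psi_{\rm data}-\pi_\tau[\Psi_{\rm data}]\bigr)\pi_\tau\,\big|_{A\overline{\theta}+b}
= -\bigl(\overline{\Psi}_{\rm data}-\overline{\pi}_\tau[\overline{\Psi}_{\rm data}]\bigr)\overline{\pi}_\tau,
$$
where the last equality uses precisely the three transformation rules above. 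Thus $V_\tau(A\,\cdot+b)$ solves (\ref{eq:PDE2}); since the right-hand side of (\ref{eq:PDE2}) integrates to zero (the solvability condition), the PDE pins down its solution up to an additive constant, and only $\nabla_{\overline{\theta}}\overline{V}_\tau$ enters $\overline{F}_\tau$, so we may legitimately set $\overline{V}_\tau = V_\tau(A\,\cdot+b)$.

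Finally, assembling the intertwining relation with the covariance and chain rules once more,
$$
F_\tau(A\overline{\theta}+b) = \Sigma_\tau(\nabla_\theta V_\tau)(A\overline{\theta}+b)
= A\,\overline{\Sigma}_\tau A^{\rm T}(\nabla_\theta V_\tau)(A\overline{\theta}+b)
= A\,\overline{\Sigma}_\tau\,\nabla_{\overline{\theta}}\overline{V}_\tau(\overline{\theta}) = A\,\overline{F}_\tau(\overline{\theta}),
$$
which is exactly (\ref{eq:affineC}), so by Definition \ref{def:AI} the formulation (\ref{eq:affine_drift}) is affine-invariant. I expect the only non-routine issues to be the well-posedness and uniqueness (modulo an irrelevant constant) of the weighted elliptic PDE, which I would either assume in the spirit of \cite{reich10} or establish under mild moment and non-degeneracy conditions on $\pi_\tau$ and $\Sigma_\tau$, together with the covariance scaling identity $\Sigma_\tau = A\,\overline{\Sigma}_\tau A^{\rm T}$; everything else is a bookkeeping change of variables.
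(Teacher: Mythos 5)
Your proof is correct and follows essentially the same route as the paper: identify $\overline{V}_\tau = V_\tau(A\,\cdot+b)$ as the transformed potential by checking that the elliptic PDE transforms covariantly under the affine change of variables (using $\Sigma_\tau = A\overline{\Sigma}_\tau A^{\rm T}$), then conclude via the chain rule. You merely spell out the divergence change-of-variables identity and the uniqueness-up-to-a-constant point that the paper leaves implicit, which is a welcome but not substantively different elaboration.
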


\begin{proof} Since the covariance matrices satisfy
$$
\Sigma_\tau = A \overline{\Sigma}_\tau A^{\rm T},
$$
the PDE (\ref{eq:PDE}) transforms into the PDE (\ref{eq:PDE2}) under the affine transformation
(\ref{eq:affine}). Hence we have
$$
\overline{V}_\tau (\overline{\theta}) = V_\tau (A\overline{\theta} + b)
$$
and obtain
\begin{align*}
\Sigma_\tau \nabla_\theta V_\tau (\theta) &= A \overline{\Sigma}_\tau A^{\rm T} \nabla_\theta V_\tau (A\overline{\theta}+b) \\
&= A \overline{\Sigma}_\tau \nabla_{\overline{\theta}} V_\tau (A\overline{\theta}+b) \\
&= A \overline{\Sigma}_\tau \nabla_{\overline{\theta}} \overline{V}_\tau (\overline{\theta}) \,,
\end{align*}
which verifies (\ref{eq:affineC}).
\end{proof}

\noindent
We note that the affine invariance of (\ref{eq:TODE}) is maintained under the forward Euler time discretisation
\begin{equation*} 
\theta_{\tau_{k+1}} = \theta_{\tau_k} - \Delta \tau
\Sigma_{\tau_k} \nabla_\theta V_{\tau_k}(
\theta_{\tau_k})
\end{equation*}
with step-size $\Delta \tau >0$ and $\tau_k = k\Delta \tau$, $k = 0,\ldots,K-1$ such that $K\Delta \tau = 1$.
This immediately follows from property (\ref{eq:affineC}).

\begin{remark}
The choice  of the vector field $F_\tau(\theta)$ in 
(\ref{eq:affine_drift}) is not unique. It is, for example, possible to add any 
drift term $\Sigma_\tau \nabla_\theta U_\tau (\theta)$
with the potential $U_\tau$ chosen such that
\begin{equation} \label{eq:added_drift}
\nabla_\theta \cdot (\pi_\tau \Sigma_\tau \nabla_\theta U_\tau) = 0.
\end{equation}
A possible choice is 
$$
U_\tau (\theta) = \log \pi_\tau + \tau \Psi_{\rm data}(\theta) - \log \pi_{\rm prior}(\theta)
$$
and the associated operator on the left hand side of (\ref{eq:added_drift}) 
corresponds to the nonlinear Fokker--Planck equation arising from the EKS/ALDI mean-field evolution equations \cite{GIHWS20,GINR19} for sampling from the PDF $\pi_\tau$. Because of (\ref{eq:homotopy_def}), $U_\tau (\theta) = \mbox{const.}$ and (\ref{eq:added_drift}) follows trivially. Upon formally replacing the drift term $\Sigma_\tau \nabla_\theta U_\tau (\theta)$ by its EKS/ALDI stochastic mean-field representation, (\ref{eq:TODE}) turns into the affine-invariant stochastic evolution equation
\begin{equation} \label{eq:ALDI}
{\rm d} \theta_\tau = -\Sigma_\tau 
\nabla_\theta \left\{ V_\tau (\theta) + \tau \nabla_\theta \Psi_{\rm data}(\theta)
- \log \pi_{\rm prior}(\theta)\right\}{\rm d}t  + \Sigma^{1/2}_\tau {\rm d}W_\tau,
\end{equation}
where $W_\tau$ denotes standard $D$-dimensional Brownian motion. The associated marginal PDF $\pi_\tau$ still satisfies (\ref{eq:BIT}) provided $\pi_0 = \pi_{\rm prior}$ at initial time $\tau =0$.
\end{remark}
\subsection{Data sub-sampling} \label{sec:random_sub_sample}

If the data sets are large, then it makes sense to randomly sub-sample the data and to only use these subsets in the computation of the gradient (\ref{eq:gradient}). More precisely,
let $\Delta$ denote a randomly selected subset of $\{1,\ldots,N\}$ without replacement of cardinality $N' \le N$. Given such a mini-batch $\Delta$, the full gradient (\ref{eq:gradient}) is replaced by its approximation
$$
\nabla_\theta \Psi_{\rm data} (\theta) \approx \frac{N}{N'}
\sum_{n\in \Delta}(y_n-t_n)\phi_n\,.
$$
More abstractly, data sub-sampling in terms of mini-batches
gives rise to random negative log-likelihood functions $\Psi_{\rm data}^\gamma$, where $\gamma$ characterises the chosen mini-batch
$\Delta^\gamma$ and is the realisation of an appropriate random variable $\Gamma$ such that
\begin{equation} \label{eq:unbiased1}
\mathbb{E}_\Gamma [\Psi_{\rm data}^\gamma (\theta)] = \Psi_{\rm data}(\theta)
\end{equation}
for all parameter values $\theta \in \mathbb{R}^D$. See
\cite{LTE19} for a detailed investigation of data sub-sampling in the context of stochastic gradient descent and related methods. If $\Psi_{\rm data}^\gamma$ is now being used in (\ref{eq:PDE}) instead of $\Psi_{\rm data}$, then we denote the resulting potential by $V_\tau^\gamma$, which is also unbiased, that is,
$$
\mathbb{E}_\Gamma [\nabla_\theta V_\tau^\gamma (\theta)] =
\nabla_\theta V_\tau (\theta)
$$
due to (\ref{eq:unbiased1}). Let us now time-step the implied mean-field ODEs (\ref{eq:TODE}) by the forward Euler method, that is,
\begin{equation} \label{eq:FEM}
\theta_{\tau_{k+1}} = \theta_{\tau_k} - \Delta \tau
\Sigma_{\tau_k} \nabla_\theta V_{\tau_k}^{\gamma_k}(
\theta_{\tau_k})\,,
\end{equation}
where the $\gamma_k$'s are independent realisations of the random variable $\Gamma$. A formal application of the modified equation analysis of \cite{LTE19} leads to the stochastic differential equation (SDE)
\begin{equation}\label{eq:MSDE}
{\rm d}\theta_\tau = -\Sigma_\tau \left\{\nabla_\theta
V_\tau(\theta_\tau)\,{\rm d}\tau - (\Delta \tau \Omega_\tau(\theta_\tau))^{1/2} {\rm d}W_\tau \right\}\,,
\end{equation}
where $W_\tau$ denotes standard $D$-dimensional Brownian motion and
$$
\Omega_\tau (\theta) := 
\mbox{cov}_\Gamma \left[\nabla_\theta V_\tau^{\gamma}(\theta) \right]\,.
$$
Here $\mbox{cov}_\Gamma[f^\gamma]$ denotes the 
covariance matrix of a vector-valued random function $f^\gamma \in \R^D$ with respect to the random variable $\Gamma$. We conclude from (\ref{eq:MSDE}) that the approximation error inflicted on the posterior distribution at $\tau = 1$ can be made arbitrarily small by reducing the time-step $\Delta \tau$ of the forward Euler method (\ref{eq:FEM}). While a rigorous mathematical analysis is left to future work, we discuss an application to Cox point processes in Section \ref{sec:SCP} below and analyse a Gaussian approximation in the Appendix in more detail.

%
\section{Ensemble transform algorithms} \label{sec:transform}
%

We now discuss several Monte Carlo implementations of the proposed affine-invariant homotopy approach to Bayesian inference for
logistic regression. All methods start with a set of $M$ samples $\theta_0^{(i)}$, $i=1,\ldots,M$, from the prior distribution $\pi_0 = \pi_{\rm prior}$.
The three methods considered here differ in the evolution equations for the ensemble $\{\theta_\tau^{(i)}\}$. We introduce
the empirical mean
$$
\hat{m}_\tau = \frac{1}{M} \sum_{i=1}^M \theta_\tau^{(i)}
$$
and the empirical covariance matrix
$$
\hat \Sigma_\tau = \frac{1}{M-1} \sum_{i=1}^M (\theta_\tau^{(i)}-\hat{m}_\tau)(\theta_\tau^{(i)}-\hat{m}_\tau)^{\rm T}
$$
for $\tau \ge 0$. We also introduce the matrix $\Theta_\tau \in \R^{D \times M}$ of ensemble deviations
\begin{equation} \label{eq:deviations}
\Theta_\tau = \left(\theta_\tau^{(1)}-\hat{m}_\tau, \theta_\tau^{(2)}-\hat{m}_\tau, \ldots, \theta_\tau^{(M)}-\hat{m}_\tau\right) ,
\end{equation}
which leads to the compact representation
\begin{equation} \label{eq:covariance}
\hat \Sigma_{\tau} = \frac{1}{M-1} \Theta_\tau \Theta_\tau^{\rm T}
\end{equation}
of the empirical covariance matrix. The empirical measure associated with the ensemble $\{\theta_\tau^{(i)}\}$ at time $\tau$ is
denoted by $\hat{\pi}_\tau$ implying, for example,
$$
\hat{m}_\tau = \hat{\pi}_\tau [\theta].
$$
More generally,
$$
\hat{\pi}_\tau [f] = \frac{1}{M} \sum_{i=1}^M f(\theta_\tau^{(i)})
$$
for an observable $f(\theta)$. We describe an affine-invariant extension of the FPF next. The key building block is a diffusion map approximation to the PDE (\ref{eq:PDE}).

\subsection{Diffusion maps and the FPF}

The right-hand side of the elliptic PDE (\ref{eq:PDE}) leads to the elliptic operator
$$
\Delta_\pi \phi := \frac{1}{\pi} \nabla_\theta \cdot (\pi \Sigma \nabla_\theta \phi)
$$
with covariance matrix
$$
\Sigma = \pi \left[ (\theta-\pi[\theta])(\theta -\pi[\theta])^{\rm T} \right]
$$
and (\ref{eq:PDE}) can be rewritten as
$$
-\Delta_{\pi_\tau} V_\tau = \Psi_{\rm data} - \pi_\tau \left[\Psi_{\rm data}\right].
$$

Following the FPF methodology, as carefully described in \cite{TMM19}, we regularise the elliptic PDE problem (\ref{eq:PDE}) 
through the fixed-point equation
$$
\tilde V_\tau = P_\epsilon \tilde V_\tau + \epsilon (\Psi_{\rm data} - \pi_\tau \left[\Psi_{\rm data}\right])
$$
with regularisation parameter $\epsilon >0$ and semigroup $P_\epsilon = e^{\epsilon \Delta_{\pi_\tau}}$. One then replaces 
$V_\tau$ by $\tilde V_\tau$ in the evolution equation (\ref{eq:TODE}). 

We note that $P_\epsilon$ is the semi-group associated with the SDE
\begin{equation} \label{eq:SDE_DM}
{\rm d}S_\epsilon = \Sigma_\tau \nabla_\theta 
\log \pi_\tau (S_\epsilon) \,{\rm d}\epsilon + \sqrt{2} \Sigma_\tau^{1/2} {\rm d}W_\epsilon,
\end{equation}
where $W_\epsilon$, $\epsilon \ge 0$, is standard Brownian motion in $\R^D$.  
The SDE (\ref{eq:SDE_DM}) is affine invariant and is closely related to the EKS/ALDI mean-field equations \cite{GIHWS20,GINR19}.

The diffusion map approximation $T_\epsilon$ of $P_\epsilon$ is defined as follows:
$$
T_\epsilon f(\theta) = \frac{1}{n_\epsilon (\theta)} \int_{R^D} k_\epsilon(\theta,\theta')f(\theta') \pi_\tau (\theta') {\rm d}\theta'\,,
$$
where $n_\epsilon(\theta) := \int k_\epsilon (\theta,\theta')\pi_\tau (\theta'){\rm d}\theta'$ is the normalisation constant,
$$
k_\epsilon (\theta,\theta') := \frac{g_\epsilon (\theta,\theta')}{\sqrt{\int g_\epsilon(\theta,\theta'') \pi_\tau (\theta''){\rm d}
\theta''} \sqrt{\int g_\epsilon(\theta',\theta'') \pi_\tau (\theta''){\rm d}\theta''}}
$$
and 
\begin{equation} \label{eq:kernel}
g_\epsilon(\theta,\theta') := \exp\left(-\frac{1}{4\epsilon} (\theta-\theta')^{\rm T} \Sigma_\tau^{-1} (\theta-\theta') \right)
\end{equation}
is the Gaussian kernel associated with the pure diffusion in (\ref{eq:SDE_DM}).

The next step is to replace $\pi_\tau$ by the ensemble-induced empirical measure $\hat \pi_\tau$, which results
in the empirical approximation
$$
T_\epsilon^{(M)} f(\theta) := \frac{1}{n_\epsilon^{(M)} (\theta)} \sum_{i=1}^M k_\epsilon^{(M)} (\theta,\theta^{(i)}_\tau)f(\theta^{(i)}_\tau ) \,,
$$
where $n_\epsilon^{(M)}(\theta) := \sum_i k_\epsilon^{(M)} (\theta,\theta^{(i)}_\tau)$ is the normalisation constant, and
$$
k_\epsilon^{(M)} (\theta,\theta') := \frac{g^{(M)}_\epsilon (\theta,\theta')}{\sqrt{\sum_i g^{(M)}_\epsilon(\theta,\theta^{(i)}_\tau)} \sqrt{\sum_i g^{(M)}_\epsilon(\theta',\theta^{(i)}_\tau)}}\,.
$$
Here $g^{(M)}_\epsilon$ denotes the Gaussian kernel (\ref{eq:kernel}) with the covariance matrix $\Sigma_\tau$ replaced by its empirical estimate $\hat \Sigma_\tau$. One finally introduces the $M\times M$ Markov matrix ${\rm T}$ with entries
\begin{equation} \label{eq:DM}
{\rm T}_{ij} = \frac{1}{n_\epsilon^{(M)} (\theta_\tau^{(i)})} k_\epsilon^{(M)} (\theta_\tau^{(i)},\theta_\tau^{(j)})
\end{equation}
and the finite-dimensional fixed-point equation
$$
\tilde{\rm V} = {\rm T}\tilde{\rm V} + \epsilon \,\Delta {\rm \Psi}_{\rm data}
$$
with
$$
\Delta {\rm \Psi}_{\rm data} := \left(\Psi_{\rm data}(\theta_\tau^{(1)})- \hat{\pi}_\tau [\Psi_{\rm data}],\ldots,
\Psi_{\rm data}(\theta_\tau^{(M)})- \hat{\pi}_\tau [\Psi_{\rm data}]\right)^{\rm T} .
$$
Given the solution vector $\tilde{\rm V} \in \R^M$ with entries $\tilde{\rm V}^{(j)}$, $j=1,\ldots,M$,
the approximation to the drift term in the ODE (\ref{eq:TODE}) is now provided as follows \cite{TMM19}:
$$
\hat{\Sigma}_\tau \nabla_\theta \tilde{V}_\tau (\theta) := 
\hat{\Sigma}_\tau \nabla_\theta \left\{ \frac{1}{n_\epsilon^{(M)} (\theta)} \sum_{j=1}^M k_\epsilon^{(M)} (\theta,\theta^{(j)}_\tau) \left(
\tilde{\rm V}^{(j)} + \epsilon \,\Delta \Psi_{\rm data}^{(j)} \right)\right\} .
$$

\begin{lemma}
The affine-invariant FPF drift term for the ensemble members
$\theta_\tau^{(i)}$, $i=1,\ldots,M$, is given by
\begin{equation} \label{eq:driftFPF}
\hat{\Sigma}_\tau \nabla_\theta \tilde{V}_\tau (\theta_\tau^{(i)}) = \sum_{j=1}^M s_{ij} \theta_\tau^{(j)}
\end{equation}
with coefficients
$$
s_{ij} = \frac{1}{2\epsilon} {\rm T}_{ij}\left( r_j - \sum_{k=1}^M {\rm T}_{ik} r_k \right), \qquad r_j := \tilde{\rm V}^{(j)} + \epsilon \,\Delta \Psi_{\rm data}^{(j)}\,.
$$
\end{lemma}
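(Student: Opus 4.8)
The plan is to differentiate the explicit diffusion-map representation of $\tilde V_\tau$ and to cash in on the self-normalisation of the weights. First I would set
$$
\mathrm{T}(\theta)_j := \frac{k_\epsilon^{(M)}(\theta,\theta_\tau^{(j)})}{n_\epsilon^{(M)}(\theta)},
$$
so that $\tilde V_\tau(\theta)=\sum_{j=1}^M \mathrm{T}(\theta)_j\,r_j$ with $r_j=\tilde{\mathrm V}^{(j)}+\epsilon\,\Delta\Psi_{\rm data}^{(j)}$, and $\mathrm{T}(\theta_\tau^{(i)})_j=\mathrm{T}_{ij}$. The key structural observation is that in $k_\epsilon^{(M)}(\theta,\theta_\tau^{(j)})$ the normalising factor $\sqrt{\sum_i g^{(M)}_\epsilon(\theta,\theta_\tau^{(i)})}$ carries all of the $\theta$-dependence of the two square roots but is common to every $j$, so it cancels against the same factor hidden in $n_\epsilon^{(M)}(\theta)$. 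Consequently
$$
\mathrm{T}(\theta)_j=\frac{h_j(\theta)}{\sum_{k=1}^M h_k(\theta)},\qquad h_j(\theta):=\frac{g^{(M)}_\epsilon(\theta,\theta_\tau^{(j)})}{\sqrt{\sum_i g^{(M)}_\epsilon(\theta_\tau^{(j)},\theta_\tau^{(i)})}},
$$
where the denominator of $h_j$ is a constant independent of $\theta$; in particular $\sum_j \mathrm{T}(\theta)_j\equiv 1$.

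Next I would compute the gradient. From the Gaussian kernel (\ref{eq:kernel}) with $\Sigma_\tau$ replaced by $\hat\Sigma_\tau$ one reads off $\nabla_\theta\log g^{(M)}_\epsilon(\theta,\theta_\tau^{(j)})=-\tfrac{1}{2\epsilon}\hat\Sigma_\tau^{-1}(\theta-\theta_\tau^{(j)})$, and since the $\theta$-independent factor drops out, the same holds for $\nabla_\theta\log h_j$. Differentiating the softmax-type quotient gives
$$
\nabla_\theta \mathrm{T}(\theta)_j=\mathrm{T}(\theta)_j\Bigl(\nabla_\theta\log h_j(\theta)-\sum_{k=1}^M \mathrm{T}(\theta)_k\,\nabla_\theta\log h_k(\theta)\Bigr),
$$
and substituting the gradients of $\log h_j$ together with $\sum_k \mathrm{T}(\theta)_k=1$ makes the explicit $\theta$-terms collapse, leaving
$$
\nabla_\theta \mathrm{T}(\theta)_j=\frac{\mathrm{T}(\theta)_j}{2\epsilon}\,\hat\Sigma_\tau^{-1}\Bigl(\theta_\tau^{(j)}-\sum_{k=1}^M \mathrm{T}(\theta)_k\,\theta_\tau^{(k)}\Bigr).
$$

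To conclude, I would multiply $\nabla_\theta\tilde V_\tau(\theta)=\sum_j r_j\,\nabla_\theta\mathrm{T}(\theta)_j$ by $\hat\Sigma_\tau$, which cancels $\hat\Sigma_\tau^{-1}$, and evaluate at $\theta=\theta_\tau^{(i)}$ using $\mathrm{T}(\theta_\tau^{(i)})_j=\mathrm{T}_{ij}$, obtaining
$$
\hat\Sigma_\tau\nabla_\theta\tilde V_\tau(\theta_\tau^{(i)})=\frac{1}{2\epsilon}\sum_{j=1}^M \mathrm{T}_{ij}\,r_j\Bigl(\theta_\tau^{(j)}-\sum_{k=1}^M \mathrm{T}_{ik}\theta_\tau^{(k)}\Bigr).
$$
Pulling the $j$-independent vector $\sum_k \mathrm{T}_{ik}\theta_\tau^{(k)}$ out of the $j$-sum, relabelling the inner index, and regrouping terms as $\sum_j\bigl[\tfrac{1}{2\epsilon}\mathrm{T}_{ij}(r_j-\sum_k\mathrm{T}_{ik}r_k)\bigr]\theta_\tau^{(j)}$ gives exactly $\sum_j s_{ij}\theta_\tau^{(j)}$ with the stated coefficients.

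The only genuine difficulty I anticipate is bookkeeping: correctly tracking which of the nested normalisations inside $k_\epsilon^{(M)}$ and $n_\epsilon^{(M)}$ depend on $\theta$, so that the cancellation producing the self-normalised form of $\mathrm{T}(\theta)_j$ is rigorously justified. Once that is settled the remaining steps are routine calculus, and the affine-invariant structure — only $\hat\Sigma_\tau^{-1}$ surviving in the gradient, to be annihilated by the prefactor $\hat\Sigma_\tau$ — comes out automatically; for the kernel gradient to be well defined one tacitly assumes $\hat\Sigma_\tau$ invertible (or interprets the computation on the affine hull of the ensemble).
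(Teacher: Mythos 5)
Your proposal is correct and is essentially the same argument the paper has in mind: the paper's proof simply defers to the standard FPF/diffusion-map calculation of \cite{TMM19} and notes that $\hat{\Sigma}_\tau$ cancels, while you have written out that calculation explicitly (the self-normalised form of ${\rm T}(\theta)_j$, the softmax-type logarithmic derivative, and the cancellation of $\hat{\Sigma}_\tau^{-1}$ against the prefactor $\hat{\Sigma}_\tau$), and each step checks out, including the final regrouping into $\sum_j s_{ij}\theta_\tau^{(j)}$.
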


\begin{proof}
The calculations leading to (\ref{eq:driftFPF}) are identical to those for the standard FPF
\cite{TMM19} taking note that the covariance matrix $\hat{\Sigma}_\tau$ cancels out.
\end{proof}

\noindent
The resulting interacting particle system can be propagated in time using the forward Euler method with step size
$\Delta \tau = 1/K$, that is,
\begin{equation} \label{eq:Euler}
\theta_{\tau_{k+1}}^{(i)} = \theta_{\tau_k}^{(i)} - \Delta \tau \hat{\Sigma}_{\tau_k} \nabla_\theta \tilde{V}_{\tau_k} (\theta_{\tau_k}^{(i)}) , \qquad i = 1,\ldots,M,
\end{equation}
$k=0,\ldots,K-1$, with the right-hand side defined by (\ref{eq:driftFPF}). Also note that $M\to \infty$ leads formally to the mean-field forward Euler method (\ref{eq:FEM}).

\begin{remark}
We note that the Markov matrix ${\rm T}$, defined by (\ref{eq:DM}), is non-reversible contrary to the underlying stochastic process (\ref{eq:SDE_DM}). This problem can be addressed by a bi-stochastic approximation, which also leads to improved convergence rates. See \cite{WR20} for more details. 
\end{remark}

\noindent
While the FPF leads to a consistent approximation in the limit $M\to \infty$ and $\epsilon \to 0$, its application is limited to low-dimensional problems. We next describe two approximations that extend to high-dimensional inference problems.

\subsection{Second-order methods} \label{sec:second-order}

One can derive evolution equations for the mean $m_\tau$ and the covariance matrix $\Sigma_\tau$ of $\pi_\tau$ from
equation (\ref{eq:BIT}). Furthermore, if one assumes that $\pi_\tau$ is well approximated by a Gaussian PDF
with mean $m_\tau$ and covariance matrix $\Sigma_\tau$, the following evolution equations for the mean and the covariance matrix arise.

\begin{lemma}
If the temporal PDF $\pi_t$ in (\ref{eq:PDE}) is Gaussian with mean $m_\tau$ and covariance matrix $\Sigma_\tau$, then their evolution equations are given by
$$
\frac{\rm d}{{\rm d}\tau} m_\tau = -\Sigma_\tau \,\pi_\tau [ \nabla_\theta \Psi_{\rm data}] =
- \Sigma_\tau \Phi (\pi_\tau [y] - t), 
$$
and
$$
\frac{\rm d}{{\rm d}\tau} \Sigma_\tau = -\Sigma_\tau \, \pi_\tau [ D^2_\theta \Psi_{\rm data}] \Sigma_\tau =
- \Sigma_\tau \Phi \, \pi_\tau [R] \Phi^{\rm T}\, \Sigma_\tau ,
$$
respectively.
\end{lemma}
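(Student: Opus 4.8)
The plan is to differentiate the moments of $\pi_\tau$ directly, using the homotopy evolution equation (\ref{eq:BIT}), and then close the resulting expressions via Gaussian integration by parts (Stein's identities) together with the explicit formulas $\nabla_\theta \Psi_{\rm data}(\theta) = \Phi(y(\theta)-t)$ from (\ref{eq:gradient}) and $D^2_\theta \Psi_{\rm data}(\theta) = \Phi R(\theta)\Phi^{\rm T}$. Throughout I would assume enough growth control on $\Psi_{\rm data}$ against the Gaussian $\pi_\tau$ to differentiate under the integral sign; since $\Psi_{\rm data}$ is smooth with gradient bounded by the constant matrix $\Phi$ this is routine.

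First, for the mean I would write $m_\tau = \pi_\tau[\theta]$, differentiate, and insert (\ref{eq:BIT}) to obtain
$$
\frac{\rm d}{{\rm d}\tau} m_\tau = -\pi_\tau\!\left[\theta\,(\Psi_{\rm data} - \pi_\tau[\Psi_{\rm data}])\right]
= -\left(\pi_\tau[\theta\,\Psi_{\rm data}] - \pi_\tau[\theta]\,\pi_\tau[\Psi_{\rm data}]\right),
$$
i.e. minus the cross-covariance of $\theta$ and $\Psi_{\rm data}$ under $\pi_\tau$. Since $\pi_\tau$ is Gaussian with mean $m_\tau$ and covariance $\Sigma_\tau$, Stein's lemma $\pi_\tau[(\theta-m_\tau)f(\theta)] = \Sigma_\tau\,\pi_\tau[\nabla_\theta f]$ turns this into $-\Sigma_\tau\,\pi_\tau[\nabla_\theta\Psi_{\rm data}]$, and substituting (\ref{eq:gradient}) and using linearity of expectation gives $-\Sigma_\tau\Phi(\pi_\tau[y]-t)$, as claimed.

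For the covariance I would write $\Sigma_\tau = \pi_\tau[(\theta-m_\tau)(\theta-m_\tau)^{\rm T}]$ and differentiate: the contributions arising from the $\tau$-dependence of $m_\tau$ inside the integrand vanish because $\pi_\tau[\theta-m_\tau]=0$, leaving
$$
\frac{\rm d}{{\rm d}\tau}\Sigma_\tau = -\left(\pi_\tau\!\left[(\theta-m_\tau)(\theta-m_\tau)^{\rm T}\Psi_{\rm data}\right] - \Sigma_\tau\,\pi_\tau[\Psi_{\rm data}]\right).
$$
The crucial step is then a second-order Gaussian integration by parts. Applying Stein's lemma once to the vector-valued map $h(\theta) = (\theta-m_\tau)\Psi_{\rm data}(\theta)$, whose Jacobian is $\Psi_{\rm data}\,I + (\theta-m_\tau)(\nabla_\theta\Psi_{\rm data})^{\rm T}$, and then once more to $\nabla_\theta\Psi_{\rm data}$, yields for Gaussian $\pi_\tau$ the identity $\pi_\tau[(\theta-m_\tau)(\theta-m_\tau)^{\rm T}f] - \Sigma_\tau\pi_\tau[f] = \Sigma_\tau\,\pi_\tau[D^2_\theta f]\,\Sigma_\tau$. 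With $f = \Psi_{\rm data}$ this gives $\frac{\rm d}{{\rm d}\tau}\Sigma_\tau = -\Sigma_\tau\,\pi_\tau[D^2_\theta\Psi_{\rm data}]\,\Sigma_\tau$, and substituting $D^2_\theta\Psi_{\rm data} = \Phi R(\theta)\Phi^{\rm T}$ produces $-\Sigma_\tau\Phi\,\pi_\tau[R]\,\Phi^{\rm T}\Sigma_\tau$.

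The main obstacle I expect is the bookkeeping in the second-order Stein identity — keeping track of transposes and of the order in which the two $\Sigma_\tau$ factors appear when differentiating the vector-valued function $(\theta-m_\tau)\Psi_{\rm data}(\theta)$ — and, conceptually, making sure the Gaussian assumption is invoked exactly where it is needed. It is essential for closing the covariance equation: without it the term $\pi_\tau[(\theta-m_\tau)(\theta-m_\tau)^{\rm T}\Psi_{\rm data}]$ would not reduce to a Hessian expectation but would involve uncontrolled third moments of $\pi_\tau$. Everything else (differentiation under the integral, the vanishing of the $\dot m_\tau$ cross terms, the substitutions from (\ref{eq:gradient})) is straightforward calculation.
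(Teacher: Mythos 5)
Your proposal is correct and follows essentially the same route as the paper: differentiate the first two moments, insert the homotopy evolution equation (\ref{eq:BIT}), and close the resulting cross-covariance terms using the Gaussian identities (\ref{eq:Opper1})--(\ref{eq:Opper2}); the only difference is that the paper cites these identities while you derive them via (iterated) Stein integration by parts, which checks out.
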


\begin{proof}
The evolution equations are derived using the following well-known identity for Gaussian PDFs $\pi_\tau$ (see, for example, \cite{Op98b0}):
\begin{equation} \label{eq:Opper1}
 \pi_\tau[\theta\, (f(\theta)-  \pi_\tau[f])] = \Sigma_\tau  \pi_\tau [\nabla_\theta f]
\end{equation}
as well as
\begin{equation} \label{eq:Opper2}
\pi_\tau[(\theta-\pi_\tau[\theta])(\theta-\pi_\tau[\theta])^{\rm T} (f(\theta)-\pi_\tau[f])] = \Sigma_\tau \pi_\tau [D^2_\theta f]
\Sigma_\tau
\end{equation}
for any scalar-valued smooth function $f:\R^D \to \R$. 
\end{proof}

\noindent
In order to derive corresponding equations for the ensemble $\{\theta_\tau^{(i)}\}$, we introduce
$$
\hat{y}_\tau := \hat{\pi}_\tau [y] = \frac{1}{M} \sum_{j=1}^M y_\tau^{(j)}, \qquad y_\tau^{(j)} := y\left(\theta_\tau^{(j)}\right) \in \R^N\,,
$$
as well as 
$$
\hat{R}_\tau := \hat{\pi}_\tau [R] = \frac{1}{M}\sum_{j=1}^M R_\tau^{(j)},
$$
where the $R_\tau^{(j)} \in \R^{N\times N}$ are diagonal matrices with entries $r_{nn}^{(j)} = y_{\tau,n}^{(j)}\left(1-y_{\tau,n}^{(j)}\right)$ for
$n=1,\ldots,N$
with
$$
y_{\tau,n}^{(j)} = \sigma \left((\theta_\tau^{(j)})^{\rm T} \phi_{x_n}\right)
\in \R\,.
$$

We finally obtain the evolution equations
\begin{equation} \label{eq:SO1a}
\frac{\rm d}{{\rm d}\tau} \hat{m}_\tau = - \hat{\Sigma}_\tau \Phi (\hat{y}_\tau - t)
\end{equation}
for the ensemble mean and
\begin{equation} \label{eq:SO1b}
\frac{\rm d}{{\rm d}\tau} \Theta_\tau = -\frac{1}{2}\hat{\Sigma}_\tau \Phi \hat{R}_\tau \Phi^{\rm T} \Theta_\tau
\end{equation}
for the ensemble deviations (\ref{eq:deviations}). Note that these equations can be used to propagate the ensemble $\{\theta_\tau^{(i)}\}$ regardless of
the Gaussian assumption made for their derivation. In the following section, we will introduce further approximations which allow us to introduce an affine-invariant gradient flow structure.

\subsection{Ensemble Kalman--Bucy filter} \label{sec:enkbf}

We now turn to a formulation with gradient flow structure in the spirit of the ensemble Kalman--Bucy filter (EnKBF) for quadratic loss functions (\ref{eq:QL}) \cite{br10a,reich10,cotterreich13}. Provided that
$$
\hat{y}_\tau = \hat{\pi}_\tau [y] \approx y(\hat{m}_\tau) = y(\hat{\pi}_\tau[\theta])
$$
and
$$
y_\tau^{(i)} = y\left(\theta_\tau^{(i)}\right) \approx \hat{y}_\tau + \hat{R}_\tau \Phi^{\rm T} (\theta_\tau^{(i)}-\hat{m}_\tau) \,,
$$
one formally obtains the approximation 
\begin{equation} \label{eq:KBF1a}
\frac{\rm d}{{\rm d}t} \theta_\tau^{(i)} = - \frac{1}{2} \hat{\Sigma}_\tau \Phi \,(y_\tau^{(i)}+ y(\hat{m}_\tau) - 2t)\,.
\end{equation}
to the second-order equations (\ref{eq:SO1a})--(\ref{eq:SO1b}).

\begin{lemma}
The EnKBF equations (\ref{eq:KBF1a}) are of affine-invariant gradient structure
\begin{equation} \label{eq:KBF2a}
\frac{\rm d}{{\rm d}t} \theta_\tau^{(i)} = -\hat{\Sigma}_\tau \nabla_{\theta^{(i)}} \Psi_{\rm KBF} (\{\theta_\tau^{(j)}\})\,,
\end{equation}
with potential
\begin{equation} \label{eq:potential}
\Psi_{\rm KBF}(\{\theta^{(j)}\}) = \frac{1}{2}\sum_{j=1}^M \Psi_{\rm data}(\theta^{(j)}) + \frac{M}{2}
\Psi_{\rm data}(\hat{m}_\tau)\,.
\end{equation}
\end{lemma}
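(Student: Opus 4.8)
The plan is to verify the gradient identity (\ref{eq:KBF2a})--(\ref{eq:potential}) by a direct differentiation of the potential $\Psi_{\rm KBF}$ with respect to a single ensemble member $\theta^{(i)}$, and then to read off affine invariance from the way $\Psi_{\rm KBF}$ and the empirical covariance matrix transform under (\ref{eq:affine}). The empirical covariance $\hat\Sigma_\tau$ in (\ref{eq:KBF2a}) is to be regarded as a (state-dependent) preconditioner and is not differentiated.

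First I would compute $\nabla_{\theta^{(i)}}\Psi_{\rm KBF}$ term by term. The first sum in (\ref{eq:potential}) contributes only through its $j=i$ summand, and by the gradient formula (\ref{eq:gradient}) this gives $\tfrac12\nabla_\theta\Psi_{\rm data}(\theta^{(i)}) = \tfrac12\Phi(y^{(i)}-t)$, where $y^{(i)}=y(\theta^{(i)})$. The second term depends on $\theta^{(i)}$ only through the empirical mean $\hat m_\tau = \tfrac1M\sum_{j}\theta^{(j)}$, whose Jacobian with respect to $\theta^{(i)}$ is $\tfrac1M I_D$; the chain rule therefore yields $\nabla_{\theta^{(i)}}(\tfrac M2\Psi_{\rm data}(\hat m_\tau)) = \tfrac12\nabla_\theta\Psi_{\rm data}(\hat m_\tau) = \tfrac12\Phi(y(\hat m_\tau)-t)$. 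Adding the two contributions gives $\nabla_{\theta^{(i)}}\Psi_{\rm KBF} = \tfrac12\Phi(y^{(i)}+y(\hat m_\tau)-2t)$, and multiplying by $-\hat\Sigma_\tau$ reproduces exactly the right-hand side of (\ref{eq:KBF1a}). This establishes that (\ref{eq:KBF1a}) has the claimed gradient form.

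For affine invariance I would argue as in the lemma of Section \ref{sec:affine_invariance}. Under $\theta^{(j)} = A\overline\theta^{(j)}+b$ the empirical mean transforms as $\hat m_\tau = A\hat{\overline m}_\tau+b$ and the empirical covariance as $\hat\Sigma_\tau = A\hat{\overline\Sigma}_\tau A^{\rm T}$ by (\ref{eq:covariance}), while the transformed potential is simply the composition $\overline\Psi_{\rm KBF}(\{\overline\theta^{(j)}\}) = \Psi_{\rm KBF}(\{A\overline\theta^{(j)}+b\})$, since $\overline\Psi_{\rm data} = \Psi_{\rm data}\circ(A\,\cdot\,+b)$ and the empirical mean intertwines with the affine map. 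The map $(\overline\theta^{(j)})\mapsto(A\overline\theta^{(j)}+b)$ on $(\R^D)^M$ has block-diagonal Jacobian with blocks $A$, so $\nabla_{\overline\theta^{(i)}}\overline\Psi_{\rm KBF} = A^{\rm T}\nabla_{\theta^{(i)}}\Psi_{\rm KBF}$, and hence $-\hat{\overline\Sigma}_\tau\nabla_{\overline\theta^{(i)}}\overline\Psi_{\rm KBF} = -A^{-1}\hat\Sigma_\tau\nabla_{\theta^{(i)}}\Psi_{\rm KBF}$. This is precisely the per-particle analogue of condition (\ref{eq:affineC}) in Definition \ref{def:AI}, so the flow (\ref{eq:KBF2a}) commutes with the affine change of variables.

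The computation is routine; the only point requiring a little care is the chain rule through $\hat m_\tau$, which couples all ensemble members, but since this dependence is linear the factor $1/M$ combines cleanly with the prefactor $M/2$ in (\ref{eq:potential}). Accordingly I expect no genuine obstacle, and one may additionally remark that the same reasoning shows (\ref{eq:KBF2a}) remains affine-invariant under the forward Euler discretisation, exactly as observed for (\ref{eq:TODE}).
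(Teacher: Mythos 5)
Your proposal is correct and follows essentially the same route as the paper: a direct term-by-term differentiation of $\Psi_{\rm KBF}$, with the key point being exactly the chain-rule identity $\nabla_{\theta^{(i)}}\Psi_{\rm data}(\hat m_\tau)=\tfrac{1}{M}\Phi\,(y(\hat m_\tau)-t)$ that the paper singles out, so that the factor $1/M$ cancels the prefactor $M/2$. Your explicit verification of the affine-invariance claim (via $\hat\Sigma_\tau=A\hat{\overline\Sigma}_\tau A^{\rm T}$ and $\nabla_{\overline\theta^{(i)}}\overline\Psi_{\rm KBF}=A^{\rm T}\nabla_{\theta^{(i)}}\Psi_{\rm KBF}$) is a welcome elaboration of a step the paper leaves implicit, and is consistent with its earlier affine-invariance lemma.
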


\begin{proof}
The lemma can be verified by direct calculation and taking note of
$$
\nabla_{\theta^{(i)}} \Psi_{\rm data}(\hat m_\tau) = 
\frac{1}{M} \Phi \,(y(\hat m_\tau)-t)
$$
in particular. 
\end{proof}

\noindent
The equation (\ref{eq:KBF2a}) also possesses an affine-invariant gradient-flow structure in the space of probability measures in the mean-field limit $M\to \infty$. See \cite{cotterreich13,GINR19,GIHWS20} 
for details.

\begin{remark} The authors of \cite{KS19} proposed a different modification of the EnKF for classification problems. In our notation, it corresponds to
$$
\frac{\rm d}{{\rm d}t} \theta_\tau^{(i)} = -\sum_{n=1}^N \left\{
\frac{1}{M-1}\sum_{j=1}^M \left( y_{\tau,n}^{(j)}- 
\frac{1}{M}\sum_{l=1}^M y_{\tau,n}^{(l)} \right)
\left(\frac{t_n}{y_{\tau,n}^{(i)}}- \frac{1-t_n}{1- y_{\tau,n}^{(i)}} \right) \theta_\tau^{(j)} \right\} .
$$
The modified EnKF formulation of \cite{HLR18} uses an implementation that is closer to ours but does 
not actually propagate ensembles. Also recall that both methods are to be used for reducing the loss function 
$\Psi_{\rm data}(\theta)$ via minimisation instead of sampling from the posterior PDF (\ref{eq:posterior}), which is the subject of this paper. \end{remark}

\begin{remark}
In addition to the EnKBF formulation considered so far, there exists another variant which is based on stochastic perturbations in the innovation \cite{evensen,reich10}. We briefly explain how to extend this alternative formulation to logistic regression. We recall that the ensemble deviations $\Theta_\tau$ satisfy the evolution equation (\ref{eq:SO1b}). The same propagation of the first and second-order moments is achieved by the stochastic equations
$$
{\rm d}(\theta_\tau - m_\tau) = - \Sigma_\tau \Psi \left\{
\pi_\tau[R] \Psi^{\rm T}(\theta_\tau-m_\tau)\,{\rm d}\tau 
+ \pi_\tau[R]^{1/2}{\rm d}W_\tau \right\}
$$
in the mean-field limit $M\to \infty$, where $W_\tau$ denotes standard $N$-dimensional Brownian motion. This suggests to replace the finite ensemble size formulation (\ref{eq:KBF1a}) by the interacting system of SDEs
$$
{\rm d}\theta_\tau^{(i)}=-\hat \Sigma_\tau \Psi \left(
y_\tau^{(i)}\,{\rm d}\tau + \hat{R}_\tau \,{\rm d}W_\tau^{(i)}
- t \,{\rm d}\tau\right).
$$
Here $W_\tau^{(i)}$, $i=1,\ldots,M$, denote independent $N$-dimensional Brownian motions.
\end{remark}

\begin{remark}
One can replace the potential (\ref{eq:potential}) by
\[
\Psi_{\rm EnKBF}(\{\theta^{(j)}\}) = \frac{2-\alpha}{2}\sum_{j=1}^M \Psi_{\rm data}(\theta^{(j)}) + \frac{\alpha\,M}{2}
\Psi_{\rm data}(\hat{m}_\tau)\,.
\]
with parameter $\alpha \in [1,2)$. This modification entails a form of variance inflation \cite{evensen,reichcotter15} for $\alpha > 1$ since the implied time evolution of the ensemble deviations (\ref{eq:deviations}) becomes less contractive. 
\end{remark}

\noindent
Again, the forward Euler method (\ref{eq:Euler}) can be used to solve (\ref{eq:KBF1a}) or (\ref{eq:KBF2a}) in time. However, the inherent stiffness of the equations of motion can lead to very small time-steps \cite{akir11}. This issue and the rank deficiency of $\hat \Sigma_\tau$ for $M\le D$ are addressed in the following subsection.

\subsection{Dropout and time-stepping} \label{sec:dropout}

We now discuss two numerical issue that are common to all ensemble-base methods considered in this section. 

First, we observe that whenever $M \le D$, then the empirical covariance matrix $\hat \Sigma_\tau$ is rank-deficient and the 
ensemble $\{\theta_\tau^{(i)}\}$ is propagated in the subspace defined by the ensemble at initial time. The concept
of covariance localisation has been pioneered in the geoscience community in order to lift this restriction 
\cite{evensen,reichcotter15}. In the context of logistic regression we suggest to instead utilise the idea of dropout training
as pioneered in the machine learning community \cite{JMLR:v15:srivastava14a}. We employ the concept
of dropout as follows: In each time step, a randomly chosen number of entries in $\theta_\tau^{(i)}$, $i=1,\ldots,M$, is set to zero. The resulting modified matrix of ensemble deviations (\ref{eq:deviations}) is denoted by $\tilde \Theta_\tau$. The empirical covariance matrix is now defined by
\begin{equation} \label{eq:localisation}
\hat \Sigma_\tau = \frac{1}{(1-\mu)(M-1)} \tilde \Theta_\tau\tilde \Theta_\tau^{\rm T}\,,
\end{equation}
where $\mu\in (0,1)$ denotes the fraction of entries randomly set to zero. All other aspects of the previously considered algorithms remain unaltered.  The estimator (\ref{eq:localisation}) underestimates the cross-correlations. 
An unbiased estimator requires an additional rescaling of the off-diagonal entries by $1/(1-\mu)$.

\begin{remark} The random dropout approximation (\ref{eq:localisation}) does not provide an unbiased approximation to the empirical covariance matrix 
(\ref{eq:covariance}). An unbiased approximation would require that the off-diagonal entries of $\tilde \Theta_\tau\tilde \Theta_\tau^{\rm T}$  get scaled by $1/((1-\mu)^2(M-1))$ in (\ref{eq:localisation}). While such an unbiased approximation fits into the modified equation analysis framework of Section \ref{sec:random_sub_sample}, we found that (\ref{eq:localisation}) works well for practical applications as its puts a stronger emphasis on the diagonal entries of the covariance matrix, which is in line with standard covariance localisation techniques \cite{evensen,reichcotter15}.
\end{remark}

\begin{remark} There are alternative methods for breaking the sub-space property of ensemble transform methods in case of $M\le D$. For example,
the authors of \cite{KS19} suggest to randomly perturb the ensemble members after each time step of EKI. In our context, this approach can be viewed as combining the EnKBF with diffusion at low temperature. However, it is not clear that diffusion is sufficient to effectively explore the full parameter space in the context of the homotopy approach considered in this paper. 
\end{remark}

\noindent
Second, we return to the issue of efficient time-stepping of the interacting particle systems. We had previously considered
the forward Euler method (\ref{eq:Euler}). However, the method can encounter severe step-size restrictions due to
its restricted domain of stability. Following \cite{akir11}, we therefore consider the following tamed version of the forward Euler discretisation (\ref{eq:Euler}) for the EnKBF formulation from Section \ref{sec:enkbf}. 
Starting from (\ref{eq:KBF1a}) we employ
\begin{equation}
\theta_{\tau_{k+1}}^{(i)} = \theta_{\tau_{k}}^{(i)} - \frac{\Delta \tau}{2}
\hat{\Sigma}_{\tau_k} \Phi \left( I+\Delta \tau \hat{R}_{\tau_k} \Phi^{\rm T} \hat{\Sigma}_{\tau_k} \Phi \right)^{-1} (y_{\tau_k}^{(i)}+ y(\hat{m}_{\tau_k}) - 2t) \label{eq:tamed_integrator}
\end{equation}
with step size $\Delta \tau>0$ for $k=0,\ldots,K-1$. If the necessary matrix inversions become too computationally expensive, one can diagonalise the inverse as proposed in \cite{akir11}. The same modification can be applied to the second-order formulation (\ref{eq:SO1a})-(\ref{eq:SO1b}).

%
\section{Ensemble transform Langevin dynamics} \label{sec:LD}
%

While we have focused on the homotopy approach in the previous section, we now combine the homotopy approach with overdamped Langevin dynamics in order to sample from the desired target distribution (\ref{eq:posterior}) as $\tau \to \infty$. Let us therefore turn the transport-based differential equation (\ref{eq:TODE}) into the McKean--Vlasov SDE 
\begin{equation} \label{eq:KVSDE}
{\rm d} \theta_\tau = -\Sigma_\tau \left\{ \nabla_\theta V_\tau (\theta_\tau)  + \frac{1}{2}
\Sigma_{\rm prior}^{-1}(\theta_\tau+m_\tau -2m_{\rm prior})\right\}  {\rm d}\tau + \Sigma_\tau^{1/2} {\rm d}W_\tau
\end{equation}
by adding diffusion and a prior-related drift term to (\ref{eq:TODE}). This SDE is motivated by the  affine-invariant EKS \cite{GIHWS20} and ALDI \cite{GINR19} variants of standard Langevin dynamics. Note that the crucial differences are that (i) EKS/ALDI uses the gradient $\nabla_\theta \Psi_{\rm prior}$ of the prior in the drift term while (\ref{eq:KVSDE}) contains
$$
\frac{1}{2} \Sigma_{\rm prior}^{-1}(\theta_\tau+m_\tau -2m_{\rm prior})
$$
instead, and (ii), in a similar vein, the potential $\Psi_{\rm data}(\theta)$ is replaced in (\ref{eq:KVSDE}) by a time-dependent potential $V_\tau$ which is defined as the solution of the elliptic PDE (\ref{eq:PDE}). We will find that these modifications are motivated by the two facts that (i) one can find efficient gradient-free time-stepping methods for the SDE (\ref{eq:KVSDE}) and (ii) that the SDE (\ref{eq:KVSDE}) samples from the posterior PDF in the case of Gaussian distributions as we demonstrate next.

\begin{lemma} If the initial $\pi_0$ is Gaussian and
$\Psi_{\rm data}$ is a potential of the form
$$
\Psi_{\rm data}(\theta) = \frac{1}{2} (G\theta - t)^{\rm T} \Gamma^{-1} (G\theta - t),
$$
then the marginal PDFs $\pi_\tau$ of (\ref{eq:KVSDE}) remain Gaussian for all $\tau >0$, and the SDE (\ref{eq:KVSDE}) has a stationary measure which is given by $\pi_\infty(\theta) \propto \exp(-\Psi_{\rm post}(\theta))$.
\end{lemma}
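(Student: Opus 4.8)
The plan is to use the fact that for a quadratic $\Psi_{\rm data}$ and a Gaussian $\pi_\tau$ the elliptic PDE (\ref{eq:PDE}) defining $V_\tau$ can be solved in closed form. Once $V_\tau$ is known, the McKean--Vlasov SDE (\ref{eq:KVSDE}) collapses to a \emph{linear} SDE whose coefficients depend on the running mean and covariance, and for such an SDE both the Gaussian-preservation property and the identification of the stationary law are essentially standard.

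First I would compute $V_\tau$. Set $H := G^{\rm T}\Gamma^{-1}G = D^2_\theta\Psi_{\rm data}$, so $\nabla_\theta\Psi_{\rm data}(\theta)=H\theta - G^{\rm T}\Gamma^{-1}t$. Using $\nabla_\theta\pi_\tau = -\pi_\tau\Sigma_\tau^{-1}(\theta-m_\tau)$ for $\pi_\tau=\mathcal N(m_\tau,\Sigma_\tau)$, together with $\pi_\tau[(\theta-m_\tau)^{\rm T}H(\theta-m_\tau)]={\rm tr}(H\Sigma_\tau)$, a direct substitution shows that the mean-zero solution of the weighted Poisson equation $-\Delta_{\pi_\tau}V_\tau = \Psi_{\rm data}-\pi_\tau[\Psi_{\rm data}]$, i.e.\ of (\ref{eq:PDE}), satisfies
\[
\nabla_\theta V_\tau(\theta) = \tfrac12\bigl(\nabla_\theta\Psi_{\rm data}(\theta)+\nabla_\theta\Psi_{\rm data}(m_\tau)\bigr)=\tfrac12H(\theta-m_\tau)+\nabla_\theta\Psi_{\rm data}(m_\tau).
\]
Since this Poisson problem pins down $\nabla_\theta V_\tau$ uniquely, the extra solenoidal drifts allowed in the construction of the homotopy vector field play no role here. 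Substituting into (\ref{eq:KVSDE}) and recalling $\nabla_\theta\Psi_{\rm post}(\theta)=\nabla_\theta\Psi_{\rm data}(\theta)+\Sigma_{\rm prior}^{-1}(\theta-m_{\rm prior})=\Sigma_{\rm MAP}^{-1}(\theta-\theta_{\rm MAP})$, the data drift and the prior drift combine, and (\ref{eq:KVSDE}) becomes the linear McKean--Vlasov SDE
\[
{\rm d}\theta_\tau = -\tfrac12\Sigma_\tau\Sigma_{\rm MAP}^{-1}\bigl((\theta_\tau-\theta_{\rm MAP})+(m_\tau-\theta_{\rm MAP})\bigr){\rm d}\tau + \Sigma_\tau^{1/2}{\rm d}W_\tau .
\]

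Next I would run the usual Gaussian closure. Positing $\pi_\tau$ Gaussian and taking the first two moments of this SDE yields the closed system $\dot m_\tau = -\Sigma_\tau\Sigma_{\rm MAP}^{-1}(m_\tau-\theta_{\rm MAP})$ and $\dot\Sigma_\tau = \Sigma_\tau-\Sigma_\tau\Sigma_{\rm MAP}^{-1}\Sigma_\tau$; conversely, the linear SDE driven by the deterministic coefficients $(m_\tau,\Sigma_\tau)$ obtained from this system has, for Gaussian $\theta_0$, a Gaussian law whose moments solve the same system, so by uniqueness of solutions to the nonlinear Fokker--Planck equation $\pi_\tau=\mathcal N(m_\tau,\Sigma_\tau)$ for all $\tau>0$. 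For the stationary statement, note first that $\Sigma_\tau$ stays positive definite along the flow (the covariance equation has the form $\dot\Sigma_\tau=f(\Sigma_\tau)$, so eigenvalues evolve by the scalar logistic ODE and cannot reach $0$), hence the only fixed point of the moment system is $m_\tau=\theta_{\rm MAP}$, $\Sigma_\tau=\Sigma_{\rm MAP}$, i.e.\ the Gaussian $\mathcal N(\theta_{\rm MAP},\Sigma_{\rm MAP})$, which equals $\pi_{\rm post}\propto\exp(-\Psi_{\rm post})$ because $\Psi_{\rm post}$ is quadratic here. One even reads off convergence: with $P_\tau := \Sigma_{\rm MAP}^{-1/2}\Sigma_\tau\Sigma_{\rm MAP}^{-1/2}$ the covariance equation reads $\dot P_\tau = P_\tau-P_\tau^2$, whose eigenvalues follow $\dot\lambda=\lambda(1-\lambda)$ and converge to $1$, so $\Sigma_\tau\to\Sigma_{\rm MAP}$ and then $m_\tau\to\theta_{\rm MAP}$.

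I expect the only genuinely computational step to be the explicit solution of (\ref{eq:PDE}) in the first paragraph, and even that reduces to a one-line verification once the ansatz $\nabla_\theta V_\tau(\theta)=\tfrac12(\nabla_\theta\Psi_{\rm data}(\theta)+\nabla_\theta\Psi_{\rm data}(m_\tau))$ has been guessed (it is the natural Gaussian analogue of the EnKBF drift (\ref{eq:KBF1a})). The point that deserves care is the rigorous justification of the Gaussian closure --- well-posedness and uniqueness for the McKean--Vlasov flow / nonlinear Fokker--Planck equation with the $\Sigma_\tau$-dependent, state-dependent diffusion coefficient --- and, relatedly, checking that $\Sigma_\tau$ remains nondegenerate so that the fixed-point argument is valid; at the level of this lemma these can be treated by the standard linear-SDE arguments sketched above.
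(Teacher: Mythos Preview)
Your proposal is correct and follows essentially the same route as the paper: solve the Poisson equation (\ref{eq:PDE}) explicitly to obtain $\nabla_\theta V_\tau(\theta)=\tfrac12 G^{\rm T}\Gamma^{-1}(G\theta+Gm_\tau-2t)$, observe that the resulting McKean--Vlasov SDE is linear and hence Gaussian-preserving, and read off the moment equations and their fixed point $(m_{\rm post},\Sigma_{\rm post})$. Your use of the notation $(\theta_{\rm MAP},\Sigma_{\rm MAP})$ in place of the paper's $(m_{\rm post},\Sigma_{\rm post})$ is harmless here since the posterior is Gaussian, and your extra convergence argument via $P_\tau=\Sigma_{\rm MAP}^{-1/2}\Sigma_\tau\Sigma_{\rm MAP}^{-1/2}$ and the logistic ODE is a nice addition that goes slightly beyond what the paper actually proves.
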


\begin{proof} Under the stated assumption, we have
$$
-\frac{1}{\pi_\tau(\theta)}\nabla_\theta \cdot (\pi_\tau(\theta) \Sigma_\tau \nabla_\theta V_\tau(\theta)) =
(\theta-m_\tau)^{\rm T} \nabla_\theta V_\tau (\theta) - \nabla_\theta \cdot \Sigma_\tau \nabla_\theta V_\tau(\theta)
$$
as well as
\begin{align*}
\Psi_{\rm data}(\theta) - \pi_\tau [\Psi_{\rm data}] &=
\frac{1}{2} (G\theta-t)^{\rm T} \Gamma^{-1} (G\theta-t) \,\,-\\
& \qquad \qquad \frac{1}{2} \pi_\tau[\theta^{\rm T}G^{\rm T} \Gamma^{-1} G\theta] + m_\tau^{\rm T}G^{\rm T}\Gamma^{-1} t - \frac{1}{2} t^{\rm T}\Gamma^{-1} t\\
&= \frac{1}{2} (\theta-m_\tau)^{\rm T} G^{\rm T} \Gamma^{-1}(G\theta + Gm_\tau - 2t) \,\,-\\
&\qquad \qquad 
 \frac{1}{2} \pi_\tau[\theta^{\rm T}G^{\rm T}\Gamma^{-1} G\theta] + \frac{1}{2} m_\tau^{\rm T} G^{\rm T} \Gamma^{-1} G m_\tau .
\end{align*}
Equating both expressions leads us to
\begin{equation} \label{eq:linear_grad}
\nabla_\theta V_\tau (\theta) = \frac{1}{2} G^{\rm T} \Gamma^{-1} (G\theta + Gm_\tau - 2t).
\end{equation}
The evolution equations arising from (\ref{eq:KVSDE}) for the mean $m_\tau$ and the covariance matrix $\Sigma_\tau$ 
are therefore given by
\begin{align*}
\frac{\rm d}{{\rm d}\tau} m_\tau &= -\Sigma_\tau \left(G^{\rm T} \Gamma^{-1}(Gm_\tau -t ) + \Sigma_{\rm prior}^{-1}(m_\tau - m_{\rm prior})\right)\\
&= - \Sigma_\tau \Sigma_{\rm post}^{-1}(m_\tau - m_{\rm post})
\end{align*}
and
\begin{align*}
\frac{\rm d}{{\rm d}\tau}\Sigma_\tau &= -\Sigma_\tau \left(G^{\rm T} \Gamma^{-1}G + \Sigma_{\rm prior}^{-1}\right) \Sigma_\tau 
+\Sigma_\tau \\ &=  - \Sigma_\tau \Sigma_{\rm post}^{-1} \Sigma_\tau + \Sigma_\tau,
\end{align*}
respectively. The associated equilibrium solutions are 
$$
m_\infty = m_{\rm post} := m_{\rm prior} - \Sigma_{\rm post}G^{\rm T} \Gamma^{-1} (Gm_{\rm prior} - t)
$$
and 
$$
\Sigma_\infty = \Sigma_{\rm post} := \left(G^{\rm T} \Gamma^{-1}G + \Sigma_{\rm prior}^{-1}\right)^{-1},
$$
as desired. We also note that the linearity of the gradient (\ref{eq:linear_grad}) implies that the PDFs $\pi_\tau$
remain Gaussian. Finally, (\ref{eq:linear_grad}) is indeed the gradient of the potential
$$
V_\tau (\theta) = \frac{1}{4} (G\theta - t)^{\rm T} \Gamma^{-1} (G\theta - t)
+ \frac{1}{2} \theta^{\rm T} G^{\rm T} \Gamma^{-1} (G m_\tau - t)\,,
$$
which concludes the proof.
\end{proof}

\begin{remark} \label{rem:KVSDE}
One can replace the McKean--Vlasov SDE (\ref{eq:KVSDE}) by the following 
formulation
\begin{equation}  \label{eq:KVSDE2}
{\rm d} \theta_\tau = -\Sigma_\tau \nabla_\theta U_\tau (\theta_\tau) \,{\rm d}\tau + \Sigma_\tau^{1/2} {\rm d}W_\tau,
\end{equation}
where the potential $U_\tau$ now satisfies the elliptic PDE
\begin{equation*}
\nabla_\theta \cdot (\pi_\tau \Sigma_\tau \nabla_\theta U_\tau) = -(\Psi_{\rm post} - \pi_\tau \left[\Psi_{\rm post}\right])\,\pi_\tau  
\end{equation*}
instead of (\ref{eq:PDE}). Both formulations are equivalent for Gaussian PDFs $\pi_\tau$. While (\ref{eq:KVSDE}) is closer to the homotopy approach of Section \ref{sec:transform}, formulation (\ref{eq:KVSDE2}) has a closer resemblance to overdamped Langevin dynamics.
\end{remark}

\subsection{Asymptotic behaviour}

The case of a general negative log-likelihood function $\Psi_{\rm data}$ is more delicate. However, 
if the associated posterior (\ref{eq:posterior}) is non-Gaussian but is well approximated by a Gaussian,
and this also applies to all intermediate PDFs $\pi_\tau$, that is,
\begin{equation} \label{eq:GaussA}
\pi_\tau(\theta) \approx \tilde \pi_\tau(\theta) 
\propto \exp \left( -\frac{1}{2}(\theta - m_\tau)^{\rm T}\Sigma_\tau^{-1}(\theta - m_\tau) \right),
\end{equation}
then following the arguments from Section \ref{sec:second-order}, the first two moments satisfy 
\begin{equation}\label{eq:a1}
\frac{\rm d}{{\rm d}\tau} m_\tau = -\Sigma_t \tilde \pi_\tau[\nabla_\theta \Psi_{\rm post}]
\end{equation}
and
\begin{equation} \label{eq:a2}
\frac{\rm d}{{\rm d}t} \Sigma_\tau = -\Sigma_t \tilde \pi_\tau [D_\theta^2 \Psi_{\rm post} ] \Sigma_t + \Sigma_t\,.
\end{equation}
The equation for the covariance matrix is stable if $\tilde \pi_\tau [D_\theta^2 \Psi_{\rm post} ]$ is positive-definite for all
$\tau \ge 0$. This is, for example, the case for logistic regression.

We now discuss the limiting $\tau\to \infty$ behavior of (\ref{eq:KVSDE}) in some more detail. Its equilibrium distributions are characterised by the following lemma.

\begin{lemma}
Stationary measures $\pi_\infty$ of the mean-field equation (\ref{eq:KVSDE}) satisfy the PDE
$$
\frac{1}{2} \nabla_\theta \cdot \left( \pi_\infty  \Sigma_\infty \left\{ 
\nabla_\theta \log \pi_\infty   + \Sigma_{\rm prior}^{-1}(\theta +m_\infty -2m_{\rm prior})    \right\} \right) = \pi_\infty (\Psi_{\rm data}-
\pi_\infty [\Psi_{\rm data}]) 
$$
or, in terms of expectation values with respect to test functions $\phi$,
\begin{subequations} \label{eq:weak}
\begin{align}
&\frac{1}{2} \pi_\infty \left[ \nabla_\theta \phi \cdot \Sigma_\infty \left\{ \nabla_\theta \log \pi_\infty  + 
\Sigma_{\rm prior}^{-1}(\theta+m_\infty -2m_{\rm prior})    \right\} \right] \\
&\qquad \qquad \qquad  \qquad \qquad \qquad \qquad \qquad 
\qquad
= \pi_\infty \left[ \phi \left(\Psi_{\rm data}-\pi_\infty \left[ \Psi_{\rm data} \right] \right) \right]\,.
\end{align}
\end{subequations}
\end{lemma}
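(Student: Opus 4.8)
The plan is to pass from the McKean--Vlasov SDE (\ref{eq:KVSDE}) to its associated nonlinear Fokker--Planck equation, impose stationarity, and then invoke the defining elliptic PDE (\ref{eq:PDE}) for $V_\tau$ in order to eliminate the implicitly defined drift contribution $\nabla_\theta V_\infty$. Since the diffusion matrix $\Sigma_\tau^{1/2}(\Sigma_\tau^{1/2})^{\rm T} = \Sigma_\tau$ appearing in (\ref{eq:KVSDE}) does not depend on $\theta$ (it is a functional of the current law only), the forward Kolmogorov equation for the marginal law $\pi_\tau$ reads
\[
\partial_\tau \pi_\tau = \nabla_\theta \cdot\left(\pi_\tau \Sigma_\tau \left\{ \nabla_\theta V_\tau + \frac{1}{2} \Sigma_{\rm prior}^{-1}(\theta + m_\tau - 2m_{\rm prior}) \right\}\right) + \frac{1}{2} \nabla_\theta \cdot (\Sigma_\tau \nabla_\theta \pi_\tau),
\]
subject to the self-consistency constraints $m_\tau = \pi_\tau[\theta]$, $\Sigma_\tau = \pi_\tau[(\theta - m_\tau)(\theta - m_\tau)^{\rm T}]$, and with $V_\tau$ the solution of (\ref{eq:PDE}) for this $\pi_\tau$.

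First I would specialise to a stationary law $\pi_\infty$, so that $\partial_\tau \pi_\infty = 0$ and the law-dependent quantities freeze to $m_\infty$, $\Sigma_\infty$ and $V_\infty$, the latter solving (\ref{eq:PDE}) with $\pi_\infty$ in place of $\pi_\tau$. Rewriting the diffusion term as $\frac{1}{2} \nabla_\theta\cdot(\Sigma_\infty \nabla_\theta \pi_\infty) = \frac{1}{2} \nabla_\theta\cdot(\pi_\infty \Sigma_\infty \nabla_\theta \log \pi_\infty)$ and then using (\ref{eq:PDE}) at $\tau = \infty$, that is $\nabla_\theta\cdot(\pi_\infty \Sigma_\infty \nabla_\theta V_\infty) = -(\Psi_{\rm data} - \pi_\infty[\Psi_{\rm data}])\pi_\infty$, to substitute for the $\nabla_\theta V_\infty$ contribution, the stationarity equation collapses, after collecting the two remaining divergence terms, to exactly the strong form asserted in the lemma.

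For the weak form (\ref{eq:weak}) I would multiply this identity by a test function $\phi$, integrate over $\R^D$, and integrate by parts once in $\theta$: the boundary terms vanish under the standing decay assumptions on $\pi_\infty$, the divergence is transferred onto $\nabla_\theta\phi$, and the right-hand side is read off directly as $\pi_\infty[\phi(\Psi_{\rm data} - \pi_\infty[\Psi_{\rm data}])]$.

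The only genuinely nontrivial ingredient is the substitution of (\ref{eq:PDE}); everything else is bookkeeping. Accordingly, the main obstacle is not the algebra but the regularity and well-posedness underlying it: one must know that a stationary law $\pi_\infty$ exists with $\Sigma_\infty$ positive definite and with enough smoothness and decay that the elliptic problem (\ref{eq:PDE}) is solvable and the integration by parts is legitimate, and one must keep the mean-field coupling self-consistent throughout, since $m_\tau$, $\Sigma_\tau$ and the implicitly defined potential $V_\tau$ all depend on the evolving law. At the formal level adopted here these points are taken for granted, as is standard for mean-field derivations of this type.
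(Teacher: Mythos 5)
Your proposal is correct and follows exactly the paper's (one-line) argument: write down the nonlinear Fokker--Planck equation associated with (\ref{eq:KVSDE}), impose stationarity, and substitute the elliptic PDE (\ref{eq:PDE}) at $\tau=\infty$ to eliminate the $\nabla_\theta V_\infty$ contribution, then test against $\phi$ and integrate by parts. One small point worth flagging: transferring the divergence onto $\nabla_\theta \phi$ produces a minus sign, so the weak form consistent with the stated strong form carries $-\tfrac{1}{2}$ on the left-hand side rather than $+\tfrac{1}{2}$ as printed in (\ref{eq:weak}); this appears to be a sign slip in the paper rather than an error in your argument, but it is worth tracking explicitly since the weak form is used afterwards to derive the self-consistency equations for $m_\infty$ and $\Sigma_\infty$.
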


\begin{proof} The lemma follows from the Fokker--Planck equation associated to (\ref{eq:KVSDE}) and the PDE (\ref{eq:PDE}).
\end{proof}

\noindent
Again making the Gaussian approximation (\ref{eq:GaussA}), formally setting $\tau = \infty$ and recalling Remark \ref{rem:KVSDE}, we first obtain from (\ref{eq:weak}) the identity
$$
\frac{1}{2} \tilde \pi_\infty \left[ \nabla_\theta \phi \cdot \Sigma_\infty \nabla_\theta \log \tilde \pi_\infty  \right] 
= \tilde \pi_\infty \left[ \phi \left(\Psi_{\rm post}- \tilde \pi_\infty \left[ \Psi_{\rm post} \right] \right) \right]\,.
$$
Then,
\begin{equation}\label{eq:a1e}
\frac{1}{2} \tilde \pi_\infty [ \nabla_\theta \log \tilde \pi_\infty] = 0 = \tilde \pi_\infty [ \nabla_\theta \Psi_{\rm post}] 
\end{equation}
for $\phi(\theta) = \theta-m_\infty$, and
\begin{equation} \label{eq:a2e}
\tilde \pi_\infty [ D^2_\theta \log \tilde \pi_\infty] = \Sigma_\infty^{-1} = \tilde \pi_\infty [ D^2_\theta \Psi_{\rm post}]
\end{equation}
for $\phi(\theta) = (\theta-m_\infty)(\theta-m_\infty)^{\rm T}$, which provide a self-consistent system of equations for the mean 
$m_\infty$ and the covariance matrix $\Sigma_\infty$ of the Gaussian approximation (\ref{eq:GaussA}). Here we
have again used (\ref{eq:Opper1}) and (\ref{eq:Opper2}), respectively. 

Note that (\ref{eq:a1e}) and (\ref{eq:a2e}) are compatible with (\ref{eq:a1}) and (\ref{eq:a2}), respectively. However, (\ref{eq:a1e}) and (\ref{eq:a2e}) only require Gaussianity (or a near-Gaussianity) for the equilibrium distribution $\pi_\infty$.

\subsection{Numerical implementation}

The most appealing aspect of our modified overdamped Langevin dynamics is the possibility of implementing the SDEs (\ref{eq:KVSDE}) or
(\ref{eq:KVSDE2}), respectively, in a 
derivative-free manner, that is, without explicit knowledge of the potential $V_\tau$ and its gradient. 

We describe a suitable time-stepping method in detail for the formulation (\ref{eq:KVSDE}) utilising methods from sequential data assimilation,
which are well suited to deal with the $-\Sigma_\tau \nabla_\theta V_\tau$ 
term in the SDE (\ref{eq:KVSDE}). In other words, we numerically solve the SDE (\ref{eq:KVSDE}) with a step size $\Delta \tau$ by alternating in each time step between (i) a data assimilation step 
(such as SMC or an ensemble transform filter \cite{reichcotter15}) with negative log-likelihood 
$$
l(\theta) = \Delta \tau \Psi_{\rm data}(\theta)
$$
and (ii) the SDE
\begin{equation}
\label{eq:diffusion}
{\rm d}\theta_t = -\frac{1}{2}\Sigma_\tau
\Sigma_{\rm prior}^{-1}(\theta_\tau+m_\tau -2m_{\rm prior})  \,{\rm d}\tau + \Sigma_t^{1/2} {\rm d} W_t
\end{equation}
over the same time-interval $\Delta \tau$. More precisely, given $\theta_{\tau_k} \sim \pi_{\tau_k}$ we first find
\begin{equation} \label{eq:importance}
\tilde \theta_{\tau_{k+1}} \sim \exp(-\Delta \tau \Psi_{\rm data})\,\pi_{\tau_k}
\end{equation}
followed by solving (\ref{eq:diffusion}) with initial $\tilde \theta_{\tau_{k+1}}$ over the time-interval $\Delta \tau$ in order to obtain
$\theta_{\tau_{k+1}} \sim \pi_{\tau_{k+1}}$ for $k=0,\ldots,K-1$. Note that (\ref{eq:diffusion}) can be solved robustly by the
tamed forward Euler-Maruyama method
$$
\theta_{\tau_{k+1}} = \tilde \theta_{\tau_{k+1}} - \frac{\Delta \tau}{2} \tilde \Sigma_{\tau_{k+1}}
\left(\Sigma_{\rm prior} +  \Delta \tau \tilde \Sigma_{\tau_{k+1}} \right)^{-1} (\tilde \theta_{\tau_{k+1}}+ \tilde m_{\tau_{k+1}} 
-2m_{\rm prior}) + \sqrt{\Delta \tau} \tilde \Sigma_{\tau_{k+1}}^{1/2} \Xi_{k+1}
$$
with $\Xi_{k+1}$ a $D$-dimensional standard Gaussian random variable. As for ALDI, the $\Sigma_\tau^{1/2}$ multiplying the Brownian motion in (\ref{eq:diffusion}) requires a correction term in order to sample from the correct distribution for finite ensemble sizes $M$. See \cite{GINR19} for details. 

Our numerical implementations of (\ref{eq:importance}) in terms of ensembles $\{\theta_{\tau_k}^{(i)}\}$ 
rely on ensemble transform filters of the form \cite{reichcotter15}
\begin{equation} \label{eq:ETF}
\tilde \theta_{\tau_{k+1}}^{(j)} = \sum_{i=1}^M \theta_{\tau_k}^{(i)} s_{ij}
\end{equation}
for $j =1,\ldots,M$ with appropriate coefficients $s_{ij}$, which satisfy
\begin{equation}\label{eq:constraint1}
\sum_{i=1}^M s_{ij} = 1.
\end{equation}
We collect the coefficients $s_{ij}$ in the $M\times M$ matrix $S$.

\begin{lemma}
An ensemble transform filter is affine-invariant (compare Section  \ref{sec:affine_invariance}) if the associated coefficients $\overline{s}_{ij}$ for the filter in the transformed variable $\overline{\theta}$ satisfy
$s_{ij} = \overline{s}_{ij}$ for all $i,j = 1,\ldots,M$.  
\end{lemma}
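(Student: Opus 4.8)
The plan is to unwind what affine-invariance should mean for the discrete ensemble transform step (\ref{eq:ETF}) and then verify the defining property, in the sense of Definition \ref{def:AI}, by a direct substitution. Recall the two Bayesian inference problems are linked by the invertible affine map $\theta = A\overline{\theta} + b$, and that we call the scheme affine-invariant if $\theta_{\tau_k}^{(i)} = A\overline{\theta}_{\tau_k}^{(i)} + b$ for all $i$ propagates to $\tilde{\theta}_{\tau_{k+1}}^{(j)} = A\tilde{\overline{\theta}}_{\tau_{k+1}}^{(j)} + b$ for all $j$, where $\tilde{\overline{\theta}}_{\tau_{k+1}}^{(j)}$ is produced by the ensemble transform filter applied to the transformed problem with its own coefficients $\overline{s}_{ij}$. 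So I would first fix an ensemble $\{\theta_{\tau_k}^{(i)}\}$ at time $\tau_k$ satisfying the affine relation to $\{\overline{\theta}_{\tau_k}^{(i)}\}$.

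Next I would insert the affine relation into (\ref{eq:ETF}) and split the resulting sum:
\[
\tilde{\theta}_{\tau_{k+1}}^{(j)} = \sum_{i=1}^M \bigl(A\overline{\theta}_{\tau_k}^{(i)} + b\bigr)\, s_{ij} = A\sum_{i=1}^M \overline{\theta}_{\tau_k}^{(i)}\, s_{ij} + \Bigl(\sum_{i=1}^M s_{ij}\Bigr) b .
\]
At this point the normalisation constraint (\ref{eq:constraint1}), namely $\sum_{i} s_{ij} = 1$, collapses the second term to $b$, so that it exactly reproduces the translation part of the affine map; this is precisely the role of (\ref{eq:constraint1}). For the first term I would invoke the hypothesis $s_{ij} = \overline{s}_{ij}$, which identifies $\sum_{i} \overline{\theta}_{\tau_k}^{(i)}\, s_{ij} = \sum_{i} \overline{\theta}_{\tau_k}^{(i)}\, \overline{s}_{ij} = \tilde{\overline{\theta}}_{\tau_{k+1}}^{(j)}$, the output of the transformed filter. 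Combining the two terms gives $\tilde{\theta}_{\tau_{k+1}}^{(j)} = A\tilde{\overline{\theta}}_{\tau_{k+1}}^{(j)} + b$, as required. I would also note in passing that the transformed coefficients inherit the constraint, $\sum_{i} \overline{s}_{ij} = \sum_{i} s_{ij} = 1$, so the transformed scheme is itself a bona fide ensemble transform filter.

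There is no serious obstacle here: the argument is essentially a one-line linear-algebra computation. The only thing worth stating carefully is the bookkeeping, namely that the equality $s_{ij} = \overline{s}_{ij}$ is what handles the linear part $A$ while the constraint $\sum_i s_{ij} = 1$ is what handles the translation $b$, and that the notion of affine-invariance used for the discrete filter step is the natural discrete analogue of the propagation property in Definition \ref{def:AI}. A converse (affine-invariance forcing $s_{ij} = \overline{s}_{ij}$) would require assuming the implication for sufficiently many ensemble configurations and is not needed for the stated one-directional claim.
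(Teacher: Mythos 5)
Your proof is correct and follows essentially the same one-line computation as the paper: substitute $\theta_{\tau_k}^{(i)} = A\overline{\theta}_{\tau_k}^{(i)}+b$ into (\ref{eq:ETF}), use $s_{ij}=\overline{s}_{ij}$ for the linear part and the normalisation (\ref{eq:constraint1}) for the translation part. You are in fact slightly more explicit than the paper, which uses $\sum_i s_{ij}=1$ silently in its chain of equalities.
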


\begin{proof}
If 
$$
\theta_{\tau_k}^{(i)} = A\overline{\theta}_{\tau_k}^{(i)}+b
$$
at $\tau_k$, then
$$
\theta_{\tau_{k+1}}^{(j)} = \sum_{i=1}^M \theta_{\tau_k}^{(i)}s_{ij} =
\sum_{i=1}^M (A \overline{\theta}_{\tau_k}^{(i)} + b)s_{ij}
= A \sum_{i=1}^M  \overline{\theta}_{\tau_k}^{(i)} \overline{s}_{ij} + b
= A \overline{\theta}_{\tau_{k+1}}^{(j)} + b\,,
$$
and we conclude that 
$$
\theta_{\tau_{k+1}}^{(i)} = A\overline{\theta}_{\tau_{k+1}}^{(i)}+b
$$
at $\tau_{k+1}$ for all $i=1,\ldots,M$.
\end{proof}

\begin{table}
    \centering
    \begin{tabular}{|c | c | c | c | c|}
    \hline & & & &\\
        ensemble size/method & $M=50$ & $M=100$ & $M=200$ & $M=400$ \\ & & & & \\ \hline  & & & & \\
         McKean--Vlasov SDE & $\left( \begin{array}{c}-3.34\\-3.39 \\ 3.22 \end{array}\right)$ & $\left( \begin{array}{c}-3.35\\-3.39 \\ 3.21 \end{array}\right)$ &
         $\left( \begin{array}{c}-3.35\\-3.39 \\ 3.21 \end{array}\right)$ &
         $\left( \begin{array}{c}-3.35\\-3.39 \\ 3.22 \end{array}\right)$ \\
         & & & & \\ \hline & & & & \\
         FPF & $\left( \begin{array}{c}-3.31\\-3.35 \\ 3.20 \end{array}\right)$ & $\left( \begin{array}{c}-3.33\\-3.37 \\ 3.21 \end{array}\right)$ &
         $\left( \begin{array}{c}-3.33\\-3.37 \\ 3.21 \end{array}\right)$ &
         $\left( \begin{array}{c}-3.33\\-3.37 \\ 3.21 \end{array}\right)$ \\
         & & & & \\ \hline & & & & \\
         second-order & $\left( \begin{array}{c}-3.36\\-3.40 \\ 3.22 \end{array}\right)$ & $\left( \begin{array}{c}-3.36\\-3.41 \\ 3.22 \end{array}\right)$ &
         $\left( \begin{array}{c}-3.36\\-3.41 \\ 3.22 \end{array}\right)$ &
         $\left( \begin{array}{c}-3.36\\-3.41 \\ 3.22 \end{array}\right)$ \\
         & & & & \\ \hline & & & & \\
         EnKBF & $\left( \begin{array}{c}-3.27\\-3.31 \\ 3.20 \end{array}\right)$ & $\left( \begin{array}{c}-3.27\\-3.32 \\ 3.20 \end{array}\right)$ &
         $\left( \begin{array}{c}-3.27\\-2.31 \\ 3.19 \end{array}\right)$ &
         $\left( \begin{array}{c}-3.27\\-3.31 \\ 3.19 \end{array}\right)$ \\ & & & & \\ 
         \hline
    \end{tabular} \medskip
    \caption{Informative prior: We display the ensemble mean averaged over $L=1000$ repeated experiments for the four proposed methods as a function of the ensemble size $M$. The reference value from an ALDI simulation is $(-3.32,-3.36,3.20)^{\rm T}$. We find that all methods reproduce the reference solution provided by ALDI within small variations.}
    \label{tab:example_2a}
\end{table}

\noindent
The EnKF and the nonlinear ensemble transform filter (NETF) of \cite{Toedter16}, for example, are affine-invariant. More specifically, the NETF leads to the transformation matrix 
$$
S = w_{\tau_k} {\rm 1}^{\rm T}_M + \sqrt{M} \left( {\rm diag}\,(w_{\tau_k}) - w_{\tau_k} w_{\tau_k}^{\rm T}\right)^{1/2}
$$
where $w_{\tau_k} \in \R^M $  is the vector of normalised importance weights with entries
$$
w_{\tau_k}^{(i)} = \frac{\exp(-\Delta \tau \Psi_{\rm data}(\theta_{\tau_k}^{(i)}))}{\sum_{j=1}^M \exp(-\Delta \tau \Psi_{\rm data}(\theta_{\tau_k}^{(j)}))},
$$
${\rm 1}_M \in \R^M$ is the vector of ones, and $\mbox{diag}\,(w_{\tau_k}) \in \R^{M\times M}$ is the diagonal matrix with diagonal entries
$w_{\tau_k}$. The affine invariance follows from the invariance of the importance weights $w_{\tau_k}$. The affine invariance of the EnKF has been previously demonstrated, for example, in \cite{NP15}.

\begin{table}
    \centering
    \begin{tabular}{| c | c | c | c | c |}
    \hline & & & &\\
        ensemble size/method & $M=50$ & $M=100$ & $M=200$ & $M=400$ \\ & & & & \\ \hline  & & & & \\
         McKean--Vlasov SDE & $0.89$ & $0.83$ & $0.80$ & $0.78$ \\
         & & & & \\ \hline & & & & \\
         FPF & $0.98$ & $0.90$ & $0.86$ & $0.83$ \\
         & & & & \\ \hline & & & & \\
         second-order & $0.75$ & $0.75$ & $0.74$ & $0.74$ \\
         & & & & \\ \hline & & & & \\
         EnKBF & $0.80$ & $0.79$ & $0.78$ & $0.78$ 
         \\ & & & & \\ 
         \hline
    \end{tabular} \medskip
    \caption{Informative prior: We display the spectral norm of the final ensemble covariance matrices averaged over $L=1000$ repeated experiments for the four proposed methods as a function of the ensemble size $M$. The ALDI reference value is $0.82$. We find that the FPF slightly overestimates the variance for smaller ensemble sizes, while both the EnKBF and the second-order formulation slightly underestimate it across all ensemble sizes.}
    \label{tab:example_2b}
\end{table}

\begin{remark}
The ensemble transform particle filter (ETPF) of \cite{reich13,reichcotter15} can also be made affine-invariant by using the cost function
\begin{equation} \label{eq:ETPF}
V(S) = \frac{1}{2}\sum_{i,j=1}^M s_{ij}\, (\theta_{\tau_k}^{(i)}-\theta_{\tau_k}^{(j)})^{\rm T} \hat \Sigma_{\tau_k}^{-1}
(\theta_{\tau_k}^{(i)}-\theta_{\tau_k}^{(j)})
\end{equation}
subject to the constraints $s_{ij}\ge 0$, (\ref{eq:constraint1}), and
\begin{equation} \label{eq:constraint2}
\frac{1}{M}\sum_{j=1}^M s_{ij} = w_{\tau_k}^{(i)}
\end{equation}
in the optimal transport definition of the coefficients $s_{ij}$ in (\ref{eq:ETF}). Again, the affine invariance follows from the affine invariance of both the cost function $V(S)$ and the importance weights $w_{\tau_k}$. 
\end{remark}

%
\section{Numerical example} \label{sec:numerics}
%

In this section, we provide a couple of relatively simple numerical illustrations for the methods proposed in this paper. We start with a low-dimensional toy problem for which all proposed methods can easily be implemented and tested.

\begin{table}
    \centering
    \begin{tabular}{|c | c | c | c | c|}
    \hline & & & &\\
        ensemble size/method & $M=50$ & $M=100$ & $M=200$ & $M=400$ \\ & & & & \\ \hline  & & & & \\
         McKean--Vlasov SDE & $\left( \begin{array}{c}-2.61\\-2.65 \\ 2.15 \end{array}\right)$ & $\left( \begin{array}{c}-2.60\\-2.64 \\ 2.15 \end{array}\right)$ &
         $\left( \begin{array}{c}-2.59\\-2.62 \\ 2.15 \end{array}\right)$ &
         $\left( \begin{array}{c}-2.59\\-2.62 \\ 2.15 \end{array}\right)$ \\
         & & & & \\ \hline & & & & \\
         FPF & $\left( \begin{array}{c}-2.16\\-2.15 \\ 1.64 \end{array}\right)$ & $\left( \begin{array}{c}-2.28\\-2.30 \\ 1.83 \end{array}\right)$ &
         $\left( \begin{array}{c}-2.39\\-2.41 \\ 1.95 \end{array}\right)$ &
         $\left( \begin{array}{c}-2.44\\-2.47 \\ 2.00 \end{array}\right)$ \\
         & & & & \\ \hline & & & & \\
         second-order & $\left( \begin{array}{c}-2.29\\-2.32 \\ 1.82 \end{array}\right)$ & $\left( \begin{array}{c}-2.30\\-2.32 \\ 1.83 \end{array}\right)$ &
         $\left( \begin{array}{c}-2.31\\-2.33 \\ 1.83 \end{array}\right)$ &
         $\left( \begin{array}{c}-2.31\\-3.34 \\ 1.84 \end{array}\right)$ \\
         & & & & \\ \hline & & & & \\
         EnKBF & $\left( \begin{array}{c}-2.14\\-2.16 \\ 1.73 \end{array}\right)$ & $\left( \begin{array}{c}-2.16\\-2.18 \\ 1.74 \end{array}\right)$ &
         $\left( \begin{array}{c}-2.17\\-2.19 \\ 1.75 \end{array}\right)$ &
         $\left( \begin{array}{c}-2.17\\-2.19 \\ 1.76 \end{array}\right)$ \\ & & & & \\ 
         \hline
    \end{tabular} \medskip
    \caption{Less informative prior: We display the ensemble mean averaged over $L=1000$ repeated experiments for the four proposed methods as a function of the ensemble size $M$. We find that all methods qualitatively reproduce the reference value $(-2.56,-2.59,2.15)^{\rm T}$ provided by ALDI.  While the second-order method and the EnKBF suffer from a systematic bias, the FPF approaches the reference solution as the ensemble size increases. The McKean--Vlasov SDE formulation yields good approximations for all ensemble sizes.}
    \label{tab:example_2c}
\end{table}

\begin{example} \label{example2}
We follow the example from \cite{Bishop06} Section 4.2.1.
Specifically, consider the Gaussian likelihood functions
$$
\pi(x|C_i) \propto \exp\left(-\frac{1}{2}
(x-\mu_i)^{\rm T} B^{-1}(x-\mu_i)\right)
$$
for inputs $x$ to belong to classes $C_i$, $i=1,2$. The implied posterior PDF $\pi(C_1|\phi_x)$ is given by (\ref{eq:conditional}) with $\phi_x = (x^{\rm T},1)^{\rm T}$, that is $D = J+1$, and the true parameter value is 
\begin{equation} \label{eq:ex2_theta_opt}
\theta = \left( \begin{array}{c}
B^{-1}(\mu_1-\mu_2)\\
-\frac{1}{2}\mu_1^{\rm T} B^{-1}\mu_1 + \frac{1}{2}
\mu_2^{\rm T}B^{-1} \mu_2 + \ln \frac{\pi(C_1)}{\pi(C_2)}
\end{array} \right)\,.
\end{equation}
Furthermore, given data points $(x_n,t_n)$, $n=1,\ldots,N$, the maximum likelihood estimators for $\pi(C_1)$, $\mu_i$, and $B$ are given by
$$
\hat \pi(C_1) = \frac{N_1}{N}, \qquad N_1 = \sum_{n=1}^N t_n,
$$
$$
\hat \mu_1 = \frac{1}{N_1}\sum_{n=1}^N t_n x_n,\qquad
\hat \mu_2 = \frac{1}{N-N_1} \sum_{n=1}^N (1-t_n)x_n\,,
$$
and
$$
\hat B = \frac{1}{N} \left\{
\sum_{n=1}^N t_n (x_n-\hat \mu_1)(x_n-\hat \mu_1)^{\rm T} +
\sum_{n=1}^N (1-t_n)(x_n-\hat \mu_2)(x_n-\hat \mu_2)^{\rm T} \right\}\,,
$$
respectively. See Section 4.2.2 in \cite{Bishop06} for details.
In this example, we will however directly estimate the true
parameter value (\ref{eq:ex2_theta_opt}) using Bayesian inference.

We consider the special case $J = 2$ and use $B = {\rm I}$, ${\rm I}$ the identity matrix, $\mu_1 =
(-1,-1)^{\rm T}$, $\mu_2 = (2,2)^{\rm T}$, and $\pi(C_1) = \pi(C_2) = 1/2$ to generate $N=100$ data points. We note that the chosen values for $\mu_1$, $\mu_2$, and $B$ lead to 
\begin{equation} \label{eq:ex2_theta}
\theta = \left( \begin{array}{c} -3 \\ -3 \\ 3
\end{array} \right).
\end{equation}

We now test the proposed algorithms using (i) an informative Gaussian prior
with mean (\ref{eq:ex2_theta}) and covariance matrix $\Sigma_{\rm prior} = {\rm I}$,
and (ii) a less informative Gaussian prior with mean $m_{\rm prior} = 0$ and covariance matrix $\Sigma_{\rm prior} = 4\,{\rm I}$. We time step the affine-invariant FPF, the second-order filter (\ref{eq:SO1a})-(\ref{eq:SO1b}), and the
EnKBF (\ref{eq:KBF1a}) with the forward Euler method using a step size $\Delta \tau = 10^{-3}$.  A further reduction of the step size did not change the results in a statistically significant manner. The McKean--Vlasov SDE
(\ref{eq:KVSDE}) is also implemented with the NETF as the inner data assimilation step. Furthermore, we implemented the ALDI method \cite{GINR19} for comparison since it is known to exactly sample from the posterior PDF (\ref{eq:posterior}). Both ALDI and the McKean--Vlasov SDE (\ref{eq:KVSDE2}) were run up to time $\tau = 10$ with step-size $\Delta \tau = 10^{-2}$ at which point the interacting particle systems were considered to be in equilibrium. 

The implementation of the FPF, as well as the SDE (\ref{eq:KVSDE}), explicitly involve the evaluation of the negative log-likelihood function (\ref{eq:nlf}). We found that we needed to replace (\ref{eq:y_x}) by
$$
y_n = 0.99 \,\sigma (\theta^{\rm T}\phi_{x_n}) + 0.005
$$
in order to avoid numerical instabilities due to exceedingly small values of $\ln y_n$. The band-width, $\epsilon$, in (\ref{eq:kernel}) has been set to $\epsilon = 0.1$.

All experiments were repeated $L = 1000$ times in order to average out random sampling effects. The numerically computed posterior means and spectral norm of the posterior covariance matrices can be found in Tables \ref{tab:example_2a} and \ref{tab:example_2b}, respectively, for the informative prior, and in Tables 
\ref{tab:example_2c} and \ref{tab:example_2d}, respectively, for the less informative prior. Reference values from an ALDI simulation are provided in the captions. While all methods reproduce those reference values very well in case of the informative prior, this picture changes for the less informative prior, where one expects a stronger nonlinear and non-Gaussian behavior of the associated Bayesian inference problem. One finds that the accuracy of the FPF improves as the ensemble size increases while the second-order method and the EnKBF suffer from a systematic bias. We also find that the McKean--Vlasov SDE formulation (\ref{eq:KVSDE}) behaves rather well over the full range of ensemble sizes. We conclude that all tested methods are able to qualitatively reproduce the exact parameter value (\ref{eq:ex2_theta}) even in the case of the less informative prior. The results also indicate that the posterior distribution is close to Gaussian, while the intermittent PDFs $\pi_\tau$ must deviate significantly from a Gaussian distribution.

In terms of computational cost, the EnKBF and the second-order method perform comparable and are the least expensive by far for all ensemble sizes. While the McKean--Vlasov SDE is twice as expensive as the EnKBF at $M = 50$, this ratio goes up to a factor of five at $M = 400$. The FPF, on the other hand, is ten times as expensive as the EnKFB at $M =50$ which increases  steeply to a factor of one hundred at $M = 400$.

\end{example}

\begin{table}
    \centering
    \begin{tabular}{| c | c | c | c | c |}
    \hline & & & &\\
        ensemble size/method & $M=50$ & $M=100$ & $M=200$ & $M=400$ \\ & & & & \\ \hline  & & & & \\
         McKean--Vlasov SDE & $1.36$ & $1.14$ &
         $1.03$ & $0.97$ \\
         & & & & \\ \hline & & & & \\
         FPF & $3.61$ & $2.58$ & $2.01$ & $1.66$ \\
         & & & & \\ \hline & & & & \\
         second-order & $0.46$ & $0.45$ & $0.45$ & $0.45$ \\
         & & & & \\ \hline & & & & \\
         EnKBF & $0.60$ & $0.59$ & $0.59$ & $0.59$ 
         \\ & & & & \\ 
         \hline
    \end{tabular} \medskip
    \caption{Less informative prior: We display the spectral norm of the final ensemble covariance matrices averaged over $L=1000$ repeated experiments for the four proposed methods as a function of the ensemble size $M$. The ALDI reference value is $1.18$.We find that the FPF overestimates the variance, while both the EnKBF and the second-order formulation underestimate it. While the FPF appears to approach the correct reference value for increased ensemble sizes, both the EnKBF and the second-order filter appear to suffer from a systematic bias. The McKean--Vlasov SDE approach leads to a slightly degraded performance in terms of ensemble variations as the ensemble size increases.}
    \label{tab:example_2d}
\end{table}

\noindent
We now consider a high-dimensional extension of the previous example, for which we further investigate the performance of 
the EnKBF (\ref{eq:KBF2a}) in terms of the tamed time-stepping method (\ref{eq:tamed_integrator}) for ensemble sizes $M\le D$, localisation via dropouts, and data sub-sampling.

\begin{example} \label{example3}
We consider the simple feature map
$$
\phi_x = x,
$$
that is $J=D$ such that 
$$
y_x(\theta) = \sigma(\theta^{\rm T}x)
$$
for $D=50$. The data is generated by first drawing a $\theta = \theta_{\rm ref}$ from the standard normal distribution in $\R^D$. Next, $N=1000$ data points are generated by randomly drawing $x_n$, $n= 1,\ldots,N$, again from 
a standard normal distribution in $\R^D$. Those points are assigned the label
$t_n = 1$ with probability $y_{x_n}(\theta_{\rm ref})$, and $t_n = 0$ otherwise. We use the EnKBF tamed time-stepping method (\ref{eq:tamed_integrator}) with step size $\Delta \tau = 1/200$ for parameter inference. The initial ensemble $\theta_0^{(i)}$, $i=1,\ldots,M$ is drawn from the prior Gaussian distribution with mean $m_{\rm prior}=0$ and the covariance matrix $\Sigma_{\rm prior}$ equal to the identity matrix. We vary the ensemble size $M$ between $M=20$ and $M = 100$ and use dropout localisation for the computation of the empirical covariance matrices $\hat \Sigma_{\tau_n}$ following (\ref{eq:localisation}). More specifically, an entry of $\theta_{\tau_n}^{(i)}$ is set to zero if
$\eta<0.5$ where the $\eta$'s are i.i.d.~ uniform random variables from the interval $[0,1]$. We report  in Table \ref{tab:example_3a}
the $l_2$-difference between $\theta_{\rm ref}$
and the ensemble mean $\hat m_\tau$ at final time $\tau = 1$. We also state the spectral norm of $\hat \Sigma_\tau$ at the final time in Table \ref{tab:example_3b}. In a second set of experiments, we apply mini-batching to the localised EnKBF with random batches of size $N' = 100$. We also compare the results to those from the standard EnKBF formulation (\ref{eq:KBF1a}).  Furthermore, ALDI is run for a sample size of $M=100$ in order to provide the numerical benchmark values listed in the captions of Tables \ref{tab:example_3a} and \ref{tab:example_3b}. All results have been averaged over $L = 1000$ independent simulations. We also report the standard deviations in addition to the averaged values. Our numerical results indicate that localisation by dropout is effective for small ensemble sizes, and that mini-batching does not degrade performance while providing significant computational savings. As for the previous example, the EnKBF is able to capture the posterior mean quite well while the posterior variances are systematically underestimated. Ensemble inflation, as widely used for the EnKF \cite{evensen,reichcotter15}, could help to further improve the variance estimates. We found that our results are insensitive to the choice of the step size $\Delta \tau$ in (\ref{eq:tamed_integrator}). However changing the threshold value $\mu = 0.5$ in the dropout localisation (\ref{eq:localisation}) affects the results significantly. Smaller values of $\mu$ lead in particular to reduced $l_2$-differences for larger ensemble sizes, $M$, while being less effective for smaller $M$. See Table \ref{tab:example_3c}.

\end{example}

\begin{table}
    \centering
     \begin{tabular}{| c | c | c | c | c | c|}
    \hline & & & & &\\
        ensemble size/method & $M=20$ & $M=40$ & $M=60$ & $M=80$ & $M=100$ \\ & & & & & \\ \hline  & & & & & \\
         EnKBF & 6.26$\pm0.75$ & 4.55$\pm$0.83 & 2.67$\pm$0.68 & 1.99$\pm$0.56 & 1.69$\pm$0.48\\ & & & & &\\ \hline & & & & &\\
         EnKBF  & 1.29$\pm$0.29 & 1.19$\pm$0.23 & 1.28$\pm$0.30 & 1.35$\pm$0.34 & 1.39$\pm$0.46\\
         (with dropout) & & & & &\\ \hline & & & & & \\
         EnKBF  & 2.14$\pm$0.46 & 1.56$\pm$0.30 & 1.42$\pm$0.25 & 1.37$\pm$0.22 & 1.35$\pm0.21$
         \\ (with dropout \& mini-batch) & & & & & \\ 
         \hline 
    \end{tabular}
     \medskip
    \caption{We display the $l_2$-difference between the posterior ensemble means and the true parameter value averaged over $L=1000$ repeated experiments  and its standard deviation for three different implementations of the EnKBF as a function of the ensemble size $M$. The reference value provided by ALDI is $1.10$. We find that localisation by dropout dramatically improves the behavior of the EnKBF especially for small ensemble sizes. Using mini-batches reduces the approximation quality slightly while providing significant computational savings.}
    \label{tab:example_3a}
\end{table}

%
\section{Generalisations and extensions} \label{sec:extensions}
%

In this section, we discuss possible extensions of the proposed methods to nonlinear logistic regression including derivative-free implementations, multi-class logistic regression, and sigmoidal Cox processes.

\subsection{Nonlinear logistic regression}

We generalise linear logistic regression from Section \ref{sec:background} to the more general logistic regression problem with 
$y_x(\theta) := \sigma(f_x(\theta))$ for given functions 
$f_x:\R^D \to \R$. Hence the gradient $\nabla_\theta y_x(\theta)$ becomes
$$
\nabla_\theta y_x(\theta) = \phi_x(\theta) \,(1-y_x(\theta))\,y_x(\theta),
$$
where the previously considered feature maps $\phi_x$ now also depend on the parameters $\theta$, that is,
$$
\phi_x(\theta) := \nabla_\theta f_x(\theta).
$$
The gradient of $\Psi_{\rm data}$ is thus explicitly given by
$$
\nabla_\theta \Psi_{\rm data}(\theta) = \sum_{n=1}^N  \phi_{x_n}(\theta)\,(y_{x_n}(\theta)-t_n)
$$
and the Hessian matrix of second-order derivatives becomes
$$
D^2_\theta \Psi_{\rm data}(\theta) = 
\sum_{n=1}^N \left\{ \phi_{x_n}\,(\theta) \,y_{x_n}(\theta)\,(1-y_{x_n}(\theta))\,\phi_{x_n}(\theta)^{\rm T} +
D_\theta \phi_{x_n}(\theta)\,(y_{x_n}(\theta)-t_n) \right\}.
$$

\medskip

\noindent
We now discuss  a derivative-free implementation of the EnKBF (\ref{eq:KBF1a}) for nonlinear logistic regression. 
The key difference to linear logistic regression is that the matrix $\Phi \in \R^{D\times N}$ now depends on $\theta$ and is given by
$$
\Phi(\theta) = (\nabla_\theta f_{x_1}(\theta),\ldots,\nabla_\theta f_{x_N}(\theta)).
$$
In order to avoid the computation of the gradients of $f_x$, one replaces $\Sigma_\tau \Phi(\theta_\tau)$ in (\ref{eq:KBF1a}) 
by the empirical correlation matrix
$$
C_\tau := \frac{1}{M-1} \sum_{i=1}^M(\theta_\tau^{(i)}-\hat{m}_\tau)\left( f_{x_1}(\theta_\tau^{(i)}), \ldots,
f_{x_N}(\theta_\tau^{(i)})\right) \in \R^{D\times N}
$$
between $\theta_\tau$ and $f_{x_n}(\theta_\tau)$ for $n=1,\ldots,N$. See  \cite{evensen} for the basic idea in
the context of the EnKF and also \cite{KS19,HLR18} for the specific application of this idea in the context of 
machine learning. See also the recent work \cite{PSV21,RW21} on tuneable derivative-free approximations. 
We also note that this substitution becomes exact in the case of linear functions $f_x(\theta)$. Such a linear approximation
is, for example, justified for functions $f_x(\theta)$ defined by sufficiently wide neural networks \cite{Lee_2020}.

\begin{table}
    \centering
    \begin{tabular}{| c | c | c | c | c | c|}
    \hline & & & & &\\
        ensemble size/method & $M=20$ & $M=40$ & $M=60$ & $M=80$ & $M=100$ \\ & & & & & \\ \hline  & & & & & \\
         EnKBF & 0.014$\pm$0.004 & 0.034$\pm$0.009 & 0.058$\pm$0.012 & 0.079$\pm$0.014 & 0.097$\pm$0.015\\ & & & & &\\ \hline & & & & &\\
         EnKBF & 0.043$\pm$0.012 & 0.071$\pm$0.019 & 0.088$\pm$0.022 & 0.100$\pm$0.024 & 0.109$\pm$0.025 \\
         (with dropout) & & & & &\\ \hline & & & & & \\
         EnKBF  & 0.041$\pm$0.012 & 0.066$\pm$0.018 & 0.084$\pm$0.021 & 0.096$\pm$0.024 & 0.105$\pm$0.025
         \\ (with dropout and mini-batch) & & & & & \\ 
         \hline
    \end{tabular} 
     \medskip
    \caption{We provide the spectral norm of the computed posterior covariance matrices in the setting described in Table \ref{tab:example_3a}. The reference value provided by ALDI is $0.22$. We find that both all three EnKBF implementations underestimate the variance. However, dropout localisation increases the ensembles spread which in turn improves the estimated ensemble means as displayed in Table \ref{tab:example_3a}.}
    \label{tab:example_3b}
\end{table}

\subsection{Multi-class logistic regression}

The extension from two-class to multi-call classification is straightforward, see \cite{Bishop06}. 
The posterior probabilities for $L$ classes $C_l$, $l=1,\ldots,L$, are given by
$$
\pi(C_l|\phi_x) =  \frac{\exp \left(a_{x,l}\right)}{\sum_{j=1}^L \exp \left(a_{x,j}\right)},
$$
where the activation is given by
$$
a_{x,l} := \theta_l^{\rm T}\phi_x .
$$
The adjustable parameters are now $\theta = (\theta_1^{\rm T},\ldots,\theta_L^{\rm T})^{\rm T} \in \R^{L D}$. The negative log-likelihood
function becomes
$$
\Psi_{\rm data}(\theta) = - \sum_{n=1}^N \sum_{l=1}^L  t_{nl} \ln y_{nl}(\theta)
$$
with 
$$
y_{nl}(\theta) := \frac{\exp \left(a_{x_n,l} \right)}{\sum_{j=1}^L \exp \left(a_{x_n,j}\right)}
$$
and data $(x_n,\{t_{nl}\}_{l=1}^L)$, where $t_{nl} \in \{0,1\}$ subject to $\sum_{l=1}^L t_{nl} = 1$ for all $n=1,\ldots,N$. The gradients satisfy
$$
\nabla_{\theta_l} \Psi_{\rm data}(\theta) = \sum_{n=1}^N (y_{nl}-t_{nl}) \phi_n ,
$$
$l=1,\ldots,L$. The Hessian matrix of second-order derivatives got $D\times D$ block entries
$$
D_{\theta_l} D_{\theta_j} \Psi_{\rm data}(\theta) = \sum_{n=1}^N y_{nl}(\delta_{lj} - y_{nj}) \phi_n \phi_n^{\rm T}
$$
with $\delta_{lj} = 1$ if $l=j$ and $\delta_{lj} = 0$ otherwise. 
We conclude that these formulas closely resemble the corresponding expressions for binary classification and, hence, all previously 
considered algorithms easily generalise to multi-class regression.

\begin{table}
    \centering
 \begin{tabular}{| c | c | c | c | c | c|}
    \hline & & & & &\\
        ensemble size/method & $M=20$ & $M=40$ & $M=60$ & $M=80$ & $M=100$ \\ & & & & & \\ \hline  & & & & & \\
         EnKBF  & 3.30$\pm$1.01 & 1.73$\pm$0.60 & 1.26$\pm$0.23 & 1.14$\pm$0.31 & 1.12$\pm$0.36\\
         (with dropout) & & & & &\\ \hline & & & & & \\
         EnKBF  & 3.39$\pm$0.86 & 1.86$\pm$0.61 & 1.38$\pm$0.25 & 1.24$\pm$0.23 & 1.19$\pm$0.27
         \\ (with dropout and mini-batch) & & & & & \\ 
         \hline 
    \end{tabular} 
 \medskip
    \caption{Same results as displayed in Table \ref{tab:example_3a} except that the dropout rate has been reduced from $\mu=0.5$ to $\mu = 0.2$. While the estimation errors increase for the smallest ensemble sizes the reduced rate improves the performance for larger ensemble sizes.}
    \label{tab:example_3c}
\end{table}

\subsection{Sigmoidal Cox processes} \label{sec:SCP}

We consider Cox processes (CPs) on a domain $\mathcal{X} \subset \R^J$. Following \cite{AMMK09}, 
the CP is parametrised by an intensity function $\lambda_x:\mathcal{X}\to \R^+$ which we represent using a sigmoidal 
random feature model, that is,
$$
\lambda_x(\theta) = \lambda^\ast \sigma(\theta^{\rm T} \phi_x),
$$
where $\lambda^\ast>0$ provides the upper threshold value for the intensity function. Note that a Gaussian process is used
by \cite{AMMK09} while we rely here on the closely related random feature maps $\phi_x$
\cite{LH21}, which easier integrate into the homotopy approach and the EnKF in particular
\cite{GR20}.

See \cite{DoOp18b} for an efficient Bayesian inference for CPs using variable augmentation. Here we suggest an alternative approach using an associated unbiased estimator of the negative log-likelihood in the forward Euler time-stepping (\ref{eq:FEM}).

More specifically, given $N$ events $x_n$, $n=1,\ldots,N$,
the negative log-likelihood function is provided by
$$
\Psi_{\rm data}(\theta) = \int_\mathcal{X} \lambda_x (\theta) {\rm d}x - \sum_{n=1}^N \ln \lambda_{x_n}(\theta)\,.
$$
Following the notation of Section \ref{sec:random_sub_sample} and using $I$ uniformly distributed samples $\hat x_i$, $i=1,\ldots,I$, on $\mathcal{X}$, we obtain the random unbiased estimator
\begin{equation} \label{eq:scp_ub}
\Psi_{\rm data}^\gamma (\theta) = \frac{\lambda^\ast}{I} \sum_{i=1}^I y_{\hat x_i}(\theta) - \sum_{n=1}^N \ln y_{x_n}(\theta) - N \ln \lambda^\ast 
\end{equation}
for this intractable likelihood $\Psi_{\rm data}$. Here we also used the previously introduced notation (\ref{eq:y_x}), that is, $\lambda_x(\theta) = \lambda^\ast y_x(\theta)$.

We now apply the homotopy approach (\ref{eq:TODE}) to the associated Bayesian inference problem. Following the discussion in Section \ref{sec:random_sub_sample}, the evolution converges to the correct posterior if in each time step (\ref{eq:FEM}) one uses a different realisation $\gamma_{\tau_k}$ of the unbiased estimator (\ref{eq:scp_ub}) instead of $\Psi_{\rm data}$ in (\ref{eq:PDE}) and takes the limit $\Delta \tau\to 0$. 

\begin{example} Let us briefly discuss a related problem in a strictly Gaussian setting. We start from the intractable negative log-likelihood function
$$
\Psi_{\rm data}(\theta) := \frac{1}{2}\int_0^1
\left( \theta^{\rm T} \phi_x -t\right)^2 {\rm d}x.
$$
Its gradient is provided by
$$
\nabla_\theta \Psi_{\rm data}(\theta) = 
\int_0^1 \phi_x \left( \theta^{\rm T} \phi_x -t\right)
{\rm d}x
$$
and the associated mean-field EnKBF becomes
$$
\frac{\rm d}{{\rm d}\tau} \theta_t
= -\frac{1}{2} \Sigma_\tau \int_0^1 \phi_x \left(
\left( \theta_\tau + m_\tau \right)^{\rm T} \phi_x -2t \right)
{\rm d}x\,.
$$
So far, everything is exact and no approximation has been made. We now replace the integral by its Monte Carlo approximation using $I$ randomly sampled $\hat{x}_{i,k}\sim {\rm U}[0,1]$, $i=1,\ldots,I$, and introduce a time step $\Delta \tau$ leading to
$$
\theta_{\tau_{k+1}} =
\theta_{\tau_k} - \frac{\Delta \tau}{2I}\Sigma_{\tau_k}
\sum_{i=1}^I \phi_{\hat x_{i,k}} \left(
\left(\theta_{\tau_k}+m_{\tau_k}\right)^{\rm T}
\phi_{\hat x_{i,k}} - 2t \right),
$$
where we re-sample the $\hat{x}_{i,k}$'s in each time-step
$\tau_k$, $k=0,\ldots,K-1$. A theoretical investigation now boils down to studying the limit $\Delta \tau \to 0$, which can be performed along the modified equation analysis of \cite{LTE19}. We note that the approximation error can be reduced by either increasing $I$ or decreasing the step size $\Delta \tau$.
\end{example}

%
\section{Conclusions} \label{sec:conclusions}
%

We have presented affine-invariant extensions of the popular EnKF and related methods such as the FPF to logistic regression. In addition, we have proposed a novel SDE-based affine-invariant and derivative-free sampling method which utilises the robust ensemble transform approach for Bayesian inference as an inner time-stepping method. We have also introduced a number of algorithmic choices such as localisation via dropout, linearly implicit time-stepping methods, and mini-batching the data, which help to improve the efficiency of the proposed methods. While we have only considered two simple numerical examples, future work will consider more challenging applications such as the training of the {\it Sentence Gestalt} model of natural language comprehension \cite{Rabovsky3}.  A numerical exploration of the affine-invariant stochastic homotopy formulation (\ref{eq:ALDI}) would also be of  practical and theoretical interest.

A number of theoretical questions also remain to be answered such as the convergence of the ensemble methods to their mean-field limits, the impact of the various approximations on the computed posterior representations, and the ergodicity and convergence to equilibrium properties of (\ref{eq:KVSDE}) and (\ref{eq:KVSDE2}), respectively. Finally, an embedding of the ensemble transform Langevin dynamics into a Markov chain Monte Carlo framework would allow for an exact sampling from the posterior.

\subsection*{Acknowledgements} 
SR is supported by Deutsche Forschungsgemeinschaft (DFG) - Project-ID 318763901 - SFB1294.

%
%
\bibliographystyle{plainurl}
\bibliography{bib_paper}
%
%

\section*{Appendix}

We provide an analysis of the SDE
\begin{equation}\label{eq:LMSDE}
{\rm d}\theta_\tau = -\Sigma_\tau \left\{\nabla_\theta
V_\tau(\theta_\tau)\,{\rm d}\tau - (\Delta \tau \Omega_\tau)^{1/2} {\rm d}W_\tau \right\},
\end{equation}
which is obtained from the modified SDE (\ref{eq:MSDE}) by setting $\Omega_\tau(\theta) = \Omega_\tau$, that is, by ignoring the state dependence of the diffusion term, and using the EnKBF drift term from 
linear regression \cite{br10a}, that is,
\[
\nabla_\theta V_\tau (\theta) = 
\frac{1}{2} G^{\rm T}\Gamma^{-1}(G\theta + G m_\tau - 2t).
\]
Conditioned on $W_\tau$, the evolution equation for the (conditional) mean becomes
\begin{equation}\label{eq:MSDE_cmean}
{\rm d}m_\tau = -\Sigma_\tau \left\{
G^{\rm T}\Gamma^{-1}(Gm_\tau - t){\rm d}\tau 
 - (\Delta \tau \Omega_\tau )^{1/2} {\rm d}W_\tau \right\}
\end{equation}
and that for the (conditional) covariance matrix 
\begin{equation}\label{eq:MSDE_cvar}
\frac{{\rm d}}{{\rm d}\tau}\Sigma_\tau = 
- \Sigma_\tau G^{\rm T}\Gamma^{-1}G \Sigma_\tau\,.
\end{equation}
We can further decompose (\ref{eq:MSDE_cmean}) into an equation for the mean, which we denote by $\mu_\tau$, and
the covariance $P_\tau$. We obtain the two evolution equations
\begin{equation}\label{eq:MSDE_mean}
\frac{\rm d}{{\rm d}\tau} \mu_\tau = 
-\Sigma_\tau
G^{\rm T}\Gamma^{-1}(G\mu_\tau - t)
\end{equation}
and
\begin{equation}\label{eq:MSDE_var}
\frac{{\rm d}}{{\rm d}\tau}P_\tau = 
- \Sigma_\tau G^{\rm T}\Gamma^{-1}G P_\tau
- P_\tau G^{\rm T} \Gamma^{-1} G \Sigma_\tau 
+ \Delta \tau \Sigma_\tau \Omega_\tau \Sigma_\tau ,
\end{equation}
respectively. The initial conditions are $\mu_0 = m_0$
and $P_0 = 0$.

We note that (\ref{eq:MSDE_mean}) and (\ref{eq:MSDE_cvar}) are equivalent to the equations arising from the standard Kalman--Bucy filter with solutions
$$
\mu_\tau = \mu_0 - K_\tau (G\mu_0-t)
$$
and
$$
\Sigma_\tau = \Sigma_0 - K_\tau G\Sigma_0,
$$
respectively, where $K_\tau$ denotes the Kalman gain matrix
$$
K_\tau = \tau \Sigma_0G^{\rm T}(\Gamma + \tau G \Sigma_0 G^{\rm T})^{-1}.
$$

If we now consider the fully observed case, that is $G = I$, under small measurement error covariance $\Gamma = \epsilon I$, $\epsilon \ll 1$, then we find that
$$
\mu_1 \approx \theta_{\rm MLE} = t, \qquad \Sigma_1 \approx
\Gamma = \epsilon I, \qquad P_1 < \frac{\Delta\tau}{2}  \Sigma_1 \Omega_1 \Sigma_1 \approx \epsilon^2 \frac{\Delta \tau}{2} \Omega_1.
$$
If we define 
\[
\Omega_\tau = \pi_\tau [\Omega_\tau(\theta)]
\]
for simplicity, where $\pi_\tau$ is the exact filtering distribution for the Kalman--Bucy filter, then $\Omega_0 = \mathcal{O}(\epsilon^{-2})$ while $\Omega_1 = \mathcal{O}(\epsilon^{-1})$. Hence, the variance, $P_\tau$, of the posterior conditional mean, $m_\tau$, is much smaller than the posterior (as well as the frequentist) variance, $\Sigma_\tau$, at $\tau = 1$ provided the step-size $\Delta \tau$ in (\ref{eq:LMSDE}) is chosen small enough.

\end{document}